\newtheorem{theorem}{Theorem}[section]
\newtheorem{lemma}[theorem]{Lemma}
\newtheorem{proposition}{Proposition}[section]
\theoremstyle{definition}
\newtheorem{definition}[theorem]{Definition}
\theoremstyle{remark}
\newtheorem{remark}[theorem]{Remark}
\newtheorem{corollary}[theorem]{Corollary}
\numberwithin{equation}{section}
\begin{document}

\title[Beta-expansion and continued fraction expansion]{Beta-expansion and continued fraction expansion of real numbers}

\author {Lulu Fang, Min Wu and Bing Li$^{*}$}
\address{Department of Mathematics, South China University of Technology, Guangzhou 510640, P.R. China}
\email{fanglulu1230@163.com, wumin@scut.edu.cn and scbingli@scut.edu.cn}


\thanks {* Corresponding author}
\subjclass[2010]{Primary 11A63, 11K50; Secondary 60F15}
\keywords{Beta-expansion, Continued fractions, Exponential decay, Strong limit theorems.}

\begin{abstract}
 Let $\beta > 1$ be a real number and $x \in [0,1)$ be an irrational number. We denote by $k_n(x)$ the exact number of partial quotients in the continued fraction expansion of $x$ given by the first $n$ digits in the $\beta$-expansion of $x$ ($n \in \mathbb{N}$). It is known that $k_n(x)/n$ converges to $(6\log2\log\beta)/\pi^2$ almost everywhere in the sense of Lebesgue measure. In this paper, we improve this result by proving that the Lebesgue measure of the set of $x \in [0,1)$ for which $k_n(x)/n$ deviates away from $(6\log2\log\beta)/\pi^2$ decays to 0 exponentially as $n$ tends to $\infty$, which generalizes the result of Faivre \cite{lesFai97} from $\beta = 10$ to any $\beta >1$. Moreover, we also discuss which of the $\beta$-expansion and continued fraction expansion yields the better approximations of real numbers.

\end{abstract}

\maketitle

\section{Introduction}

Let $\beta > 1$ be a real number and $T_\beta: [0,1) \longrightarrow [0,1)$ be the \emph{$\beta$-transformation} defined as
\begin{equation*}\label{T-beta}
T_\beta(x)= \beta x - [\beta x],
\end{equation*}
where $[x]$ denotes the greatest integer not exceeding $x$. Then every $x \in [0,1)$ can be uniquely expanded into a finite or infinite series, i.e.,
\begin{equation}\label{beta expansion}
x = \frac{\varepsilon_1(x)}{\beta} + \frac{\varepsilon_2(x)}{\beta^2} + \cdots + \frac{\varepsilon_n(x)}{\beta^n} + \cdots,
\end{equation}
where $\varepsilon_1(x) = [\beta x]$ and $\varepsilon_{n+1}(x) = \varepsilon_1(T_\beta^nx)$ for all $n \geq 1$. We call the representation (\ref{beta expansion}) the \emph{$\beta$-expansion} of $x$ denoted by $(\varepsilon_1(x),\varepsilon_2(x),\cdots, \varepsilon_n(x),\cdots)$ and $\varepsilon_n(x), n \geq 1$ the \emph{digits} of the $\beta$-expansion of $x$. Such an expansion was first introduced by  R\'{e}nyi \cite{lesRen57}, who proved that there exists a unique $T_\beta$-invariant measure $\mu$ equivalent to the Lebesgue measure $\lambda$. More precisely,
\begin{equation}\label{beta measure}
C^{-1} \lambda(A) \leq \mu(A) \leq C \lambda(A)
\end{equation}
for any Borel set $A \subseteq [0,1)$, where $C>1$ is a constant only depending on $\beta$. Furthermore, Gel'fond \cite{lesGel59} and Parry \cite{lesPar60} independently found the density formula for this invariant measure with respect to (w.r.t.)~the Lebesgue measure. Philipp \cite{lesPhi67} showed that the dynamical system $([0,1), \mathcal{B}, T_{\beta}, \mu)$ is an exponentially strong mixing measure-preserving system,
where $\mathcal{B}$ is the Borel $\sigma$-algebra on $[0,1)$. Later, Hofbauer \cite{lesHof78} proved that $\mu$ is the unique measure of maximal entropy for $T_{\beta}$. Some arithmetic and metric properties of $\beta$-expansion were studied in the literature, such as \cite{lesAB07, lesBer12, lesBla89, lesD.K02, lesF.W12, lesKL15, lesSch97, lesSch80} and the references therein.

Now we turn to introducing continued fractions. Let $T: [0,1) \longrightarrow [0,1)$ be the \emph{Gauss transformation} given by
\begin{equation*}
T(0):=0\ \ \  \text{and}\ \ \ T(x):= 1/x- [1/x]\ \ \  \text{if}\ \ \  x \in (0,1).
\end{equation*}
Then any irrational number $x \in [0,1)$ can be written as
\begin{equation}\label{continued fraction expansion}
x = \dfrac{1}{a_1(x) +\dfrac{1}{a_2(x) + \ddots +\dfrac{1}{a_n(x)+ \ddots}}},
\end{equation}
where $a_1(x) = [1/x]$ and $a_{n +1}(x) = a_1(T^nx)$ for all $n \geq 1$. The form (\ref{continued fraction expansion}) is said to be the \emph{continued fraction expansion} of $x$ and $a_n(x), n \geq 1$  are called the \emph{partial quotients} of the continued fraction expansion of $x$.  Sometimes we write the form (\ref{continued fraction expansion}) as $[a_1(x), a_2(x), \cdots, a_n(x), \cdots]$. For any $n \geq 1$, we denote by $\frac{p_n(x)}{q_n(x)}:= [a_1(x), a_2(x), \cdots, a_n(x)]$ the $n$-th \emph{convergent} of the continued fraction expansion of $x$, where $p_n(x)$ and $q_n(x)$ are relatively prime. Clearly these convergents are rational numbers and $p_n(x)/q_n(x) \to x$ as $n \to \infty$ for all $x \in [0,1)$. For more details about continued fractions, we refer the reader to a monograph of Khintchine \cite{lesKhi64}.

A natural question is whether there exists some relationship between different expansions of some real number, for instance, its $\beta$-expansion and continued fraction expansion. For any irrational number $x \in [0,1)$ and $n \geq 1$, we denote by $k_n(x)$ the exact number of partial quotients in the continued fraction expansion of $x$ given by the first $n$ digits in the $\beta$-expansion of $x$. In other words,
\[
k_n(x) = \sup\left\{m \geq 0: J(\varepsilon_1(x),\cdots, \varepsilon_n(x)) \subseteq I(a_1(x), \cdots, a_m(x))\right\},
\]
where $J(\varepsilon_1(x),\cdots, \varepsilon_n(x))$ and $I(a_1(x), \cdots, a_m(x))$ are called the \emph{cylinders} of $\beta$-expansion and continued fraction expansion respectively (see Section 2 for the definition of the cylinder). It is easy to check that
\begin{equation*}\label{increasing sequence}
0 \leq k_1(x) \leq k_2(x) \leq \cdots\ \ \text{and}\ \ \lim\limits_{n \to \infty}k_n(x) = \infty.
\end{equation*}
The quantity $k_n(x)$ was first introduced by Lochs \cite{lesLoc64} for $\beta = 10$ and has been extensively studied by many mathematicians, see \cite{lesB.I08, lesFai97, lesFai98, lesFWL, lesL.W08, lesWu06, lesWu08}.
Applying the result of Dajani and Fieldsteel \cite{lesD.F01} (Theorem 5) to $\beta$-expansion and continued fraction expansion, Li and Wu \cite{lesL.W08} obtained a metric result of $\{k_n, n \geq 1\}$, that is, for $\lambda$-almost all $x \in [0,1)$,
\begin{equation}\label{theorem eq}
\lim\limits_{n \to \infty}\frac{k_n(x)}{n} = \frac{6\log2\log\beta}{\pi^2}.
\end{equation}
The formula (\ref{theorem eq}) has been stated for $\beta=10$ by a pioneering result of Lochs \cite{lesLoc64}. Barreira and Iommi \cite{lesB.I08} proved that the irregular set of points $x \in [0,1)$ for which the limit in (\ref{theorem eq}) does not exist has full Hausdorff dimension. Li and Wu \cite{lesL.W08} gave some asymptotic results of $k_n(x)/n$ for any irrational $x \in [0,1)$ not just a kind of almost all result (see also Wu \cite{lesWu06}). For the special case $\beta = 10$, some limit theorems of $\{k_n, n \geq 1\}$ were studied in the earlier literature. For instance, using Ruelle-Mayer operator, Faivre \cite{lesFai97} showed an error term: for positive $\varepsilon$ the Lebesgue measure of the set of $x$'s for which $k_n(x)/n$ is more than $\varepsilon$ away from $(6\log2\log10)/\pi^2$
tends geometrically to zero. Later, he also proved a central limit theorem for the sequence $\{k_n, n \geq 1\}$ in \cite{lesFai98}. The law of the iterated logarithm for the sequence $\{k_n, n \geq 1\}$  was established by Wu \cite{lesWu08}.

We wonder if the similar limit theorems of the sequence $\{k_n, n \geq 1\}$ are still valid for general $\beta >1$. It is worth pointing out that the lengths of cylinders play an important role in the study of $\beta$-expansion (see \cite{lesB.W14, lesF.W12}). The methods of Faivre (see \cite{lesFai97, lesFai98}) and Wu (see \cite{lesWu06, lesWu08}) rely heavily on the length of a cylinder for $\beta = 10$.
Indeed, the $n$-th cylinder is a regular interval and its length equals always to $10^{-n}$ for $\beta = 10$. For the general case $\beta >1$, it is well-known that the $n$-th cylinder is a left-closed and right-open interval and its length has an absolute upper bound $\beta^{-n}$. Fan and Wang \cite{lesF.W12} obtained that the growth of the lengths of cylinders is multifractal and that the multifractal spectrum depends on $\beta$. However, for some ``bad'' $\beta >1$, the  $n$-th cylinder is irregular and there is no nontrivial absolute lower bound for its length, which can be much smaller than $\beta^{-n}$. This is the main difficulty we met. In fact, the authors have established a lower bound (not necessarily absolute) of the length of a cylinder (see \cite[Proposition 2.3]{lesFWL}) and obtained the central limit theorem and law of the iterated logarithm of $\{k_n, n \geq 1\}$ for any $\beta >1$ in \cite{lesFWL}.
In the present paper, we make use of this lower bound to extend the result of Faivre \cite{lesFai97} from $\beta =10$ to any $\beta >1$, which indicates that the Lebesgue measure of the set of $x \in [0,1)$ for which $k_n(x)/n$ deviates away from $(6\log2\log\beta)/\pi^2$ tends to 0 exponentially as $n$ goes to $\infty$.

\begin{theorem}\label{error term}
Let $\beta >1$ be a real number. For any $\varepsilon > 0$, there exist two positive constants $A $ and $\alpha$ (both depending on $\beta$ and $\varepsilon$) such that for all $n \geq 1$,
\[
 \lambda\left\{x \in [0,1): \left|\frac{k_n}{n}- \frac{6\log2\log\beta}{\pi^2}\right| \geq \varepsilon\right\} \leq Ae^{-\alpha n}.
\]
\end{theorem}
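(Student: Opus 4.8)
The plan is to split the two-sided deviation event into an upper tail $\{k_n \ge n(c+\varepsilon)\}$ and a lower tail $\{k_n \le n(c-\varepsilon)\}$, where $c := \frac{6\log 2\log\beta}{\pi^2}$, and to bound each by an exponentially small quantity. Throughout I would use the two metric facts that make $c$ appear: for the continued fraction cylinder one has $\frac{1}{2q_m^2} \le \abs{I(a_1,\dots,a_m)} \le \frac{1}{q_m^2}$, while for the $\beta$-cylinder one always has $\abs{J(\varepsilon_1,\dots,\varepsilon_n)} \le \beta^{-n}$ together with the non-absolute lower bound of \cite[Proposition 2.3]{lesFWL}. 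Writing $\gamma := \frac{\pi^2}{12\log 2}$ for the L\'{e}vy constant, the identity $\log\beta = 2\gamma c$ is what matches the two scales. The engine of the argument is a pair of exponential large-deviation estimates: (i) for the continued fraction denominators, $\lambda\{\abs{\frac1m\log q_m - \gamma} > \eta\} \le A_1 e^{-\alpha_1 m}$, which is $\beta$-independent and can be obtained from the Ruelle--Mayer transfer operator exactly as in Faivre \cite{lesFai97}; and (ii) for the $\beta$-cylinders, $\lambda\{\abs{J_n} < \beta^{-n(1+\delta)}\} \le A_2 e^{-\alpha_2 n}$, the ``short-cylinder'' estimate extracted from \cite[Proposition 2.3]{lesFWL} and the exponentially strong mixing of $([0,1),T_\beta,\mu)$.

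For the upper tail, set $m = \lceil n(c+\varepsilon)\rceil$. If $k_n \ge m$ then $J_n \subseteq I_m$, so in particular $\abs{J_n} \le \abs{I_m} \le q_m^{-2}$, whence $\log q_m \le \frac12(-\log\abs{J_n})$. Off the exceptional set in (ii) we have $-\log\abs{J_n} \le n(1+\delta)\log\beta$, so $\frac{\log q_m}{m} \le \frac{(1+\delta)\log\beta}{2(c+\varepsilon)} = \gamma\cdot\frac{(1+\delta)c}{c+\varepsilon}$; choosing $\delta$ small enough this is $\le \gamma - \eta$ for a fixed $\eta > 0$. Hence $\{k_n\ge m\}$ is, up to the exponentially small set from (ii), contained in the lower-deviation event $\{\frac1m\log q_m \le \gamma - \eta\}$, which by (i) has measure $\le A_1 e^{-\alpha_1 m} \le A_1 e^{-\alpha_1(c+\varepsilon)n}$. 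Adding the two exponentially small contributions gives the desired bound.

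For the lower tail, set $M = \lfloor n(c-\varepsilon)\rfloor$ (the case $c-\varepsilon \le 0$ being trivial since $k_n \ge 0$). Now $k_n \le M$ means $J_n \not\subseteq I_{M+1}$, and since $x \in J_n \cap I_{M+1}$ with $\abs{J_n}\le \beta^{-n}$, this forces $x$ to lie within distance $\beta^{-n}$ of an endpoint of $I_{M+1}$. I would split according to the size of $q_{M+1}$: on the upper-deviation event $\{\frac{1}{M+1}\log q_{M+1} > \gamma + \eta'\}$, which is exponentially small by (i), I discard; off it, $\abs{I_{M+1}} \ge \frac{1}{2}q_{M+1}^{-2}$ is at least $\frac12\beta^{-n(1-\varepsilon/(2c))}$ after choosing $\eta'$ suitably, so the boundary layer of width $\beta^{-n}$ occupies a fraction at most $4\beta^{-n\varepsilon/(2c)}$ of each such cylinder. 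Summing this uniform fractional bound over the disjoint level-$(M+1)$ cylinders (total measure $\le 1$) yields a contribution $\le 4\beta^{-n\varepsilon/(2c)}$, again exponentially small in $n$.

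The step I expect to be the main obstacle is ingredient (ii), i.e.\ controlling the anomalously short $\beta$-cylinders. This is precisely the difficulty flagged in the introduction: for ``bad'' $\beta$ there is no absolute lower bound for $\abs{J_n}$, so one cannot simply replace $\abs{J_n}$ by $\beta^{-n}$ in the upper-tail argument and must instead show that the Lebesgue measure of points whose $n$-th $\beta$-cylinder is shorter than $\beta^{-n(1+\delta)}$ decays exponentially. I would derive this from the lower bound in \cite[Proposition 2.3]{lesFWL} combined with the quasi-invariance (\ref{beta measure}) and the exponentially strong mixing of Philipp, so that the counting of short cylinders becomes a genuine large-deviation statement. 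By comparison, ingredient (i) is classical and $\beta$-independent, and the positioning and length bookkeeping steps above are routine once (i) and (ii) are in hand.
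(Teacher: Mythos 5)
Your proposal is correct, and its skeleton is the same as the paper's: split into the two one-sided tails, convert the $k_n$-events into comparisons between $|J(\varepsilon_1,\dots,\varepsilon_n)|$ and the continued-fraction scale $q_m^{-2}$, and isolate the anomalously short $\beta$-cylinders (long zero-runs) as the only genuinely new difficulty for general $\beta$. The implementations differ in two places, and the differences are instructive. First, where you import a two-sided large-deviation theorem for $\frac{1}{m}\log q_m$ from Faivre's transfer-operator analysis (your ingredient (i), correctly observed to be $\beta$-free), the paper never invokes such a theorem: it applies Markov's inequality directly to $q_m^{-2t}$ (upper tail) and to $q_{m+3}^{2t}$ (lower tail), and controls the moments $\mathrm{E}(q_m^{\pm 2t})$ through the Diophantine pressure function $\mathrm{P}$ of Kesseb\"{o}hmer--Stratmann \cite{lesKS07} via Lemma \ref{proofs}; this is the Chernoff argument run from scratch, and it is what produces the explicit rates $\theta_1(\varepsilon),\theta_2(\varepsilon)$ of Proposition \ref{first} (negativity checked from $\mathrm{P}(1)=0$ and $\mathrm{P}^\prime(1)=-\pi^2/(6\log2)$) and the connection to the Lyapunov multifractal spectrum in Remark \ref{3.6}; your route is quicker to state but yields less explicit constants. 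Second, for the lower tail the paper's key step is the deterministic Lemma \ref{jihe2}: if $J(\varepsilon_1(x),\dots,\varepsilon_n(x))\not\subseteq I(a_1(x),\dots,a_{m+1}(x))$ then, by the ordered packing of sub-cylinders, the $\beta$-cylinder swallows an entire neighbouring continued-fraction cylinder, whence $|J(\varepsilon_1(x),\dots,\varepsilon_n(x))|\geq \frac{1}{6}q_{m+3}^{-2}(x)$ and $\{k_n\leq m\}\subseteq\{q_{m+3}^2\geq\beta^n/6\}$, so a single one-sided Markov bound finishes. Your boundary-layer argument (bad points lie within $\beta^{-n}$ of an endpoint of $I_{M+1}$, a fraction of order $\beta^{-n\varepsilon/(2c)}$ of each cylinder once $q_{M+1}$ is constrained) is a valid alternative, but it requires the extra conditioning on the upper deviation of $q_{M+1}$, which the paper's lemma makes unnecessary. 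Two smaller points: your ingredient (ii) is true but needs neither Philipp's mixing nor quasi-invariance in any essential way, since by Proposition \ref{zhongyao1} the inequality $|J(\varepsilon_1(x),\dots,\varepsilon_n(x))|<\beta^{-n(1+\delta)}$ forces $l_n(x)>n\delta-1$, and $\lambda\{l_n\geq i\}\leq \beta^{1-i}/(\beta-1)$ already follows from R\'{e}nyi's counting bound alone (this is exactly the paper's Lemma \ref{l-n xiaoyu}); and in your lower-tail edge case, when $\varepsilon=c$ the event $\{k_n/n\leq c-\varepsilon\}=\{k_n=0\}$ is not empty, so either handle it by the same layer argument or note that it suffices to prove the theorem for small $\varepsilon$ because the deviation sets are nested.
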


\begin{remark}
The above theorem immediately yields that for all $\varepsilon >0$,
\[
\sum\limits_{n=1}^\infty  \lambda\left\{x \in [0,1): \left|\frac{k_n}{n}- \frac{6\log2\log\beta}{\pi^2}\right| \geq \varepsilon\right\}  < + \infty.
\]
That is, $k_n(x)/n $ converges completely to $(6\log2\log\beta)/\pi^2$ (the definition of the complete convergence see \cite{lesHR47}). This kind of convergence is stronger than almost everywhere convergence and sometimes more convenient to establish.
By Borel-Cantelli lemma, we easily obtain the limit (\ref{theorem eq}) for $\lambda$-almost all $x \in [0,1)$.
\end{remark}

For any $x \in [0,1)$, we denote the partial sums of the form (\ref{beta expansion}) by
\[
x_n = \frac{\varepsilon_1(x)}{\beta} + \frac{\varepsilon_2(x)}{\beta^2} + \cdots + \frac{\varepsilon_n(x)}{\beta^n}
\]
and call them the \emph{convergents} of the $\beta$-expansion of $x$. It is clear that the sequence $\{x_n, n \geq 1\}$ converges to $x$ as $n \to \infty$ for all $x \in [0,1)$. If $ x - x_n > \left|x-{p_n}/{q_n}\right|$, we say that the approximation of $x$ by ${p_n}/{q_n}$ is better than the approximation by $x_n$ ($n \in \mathbb{N}$). The formula (\ref{theorem eq}) implies that for $\lambda$-almost all $x \in [0,1)$, the larger $\beta$ is (that is, the more symbols we use to code number $x$), the more information about the continued fraction expansion we can obtained from its $\beta$-expansion. In other words, for sufficient large $\beta>1$,
the approximation of a real number by $\beta$-expansion is better than the approximation by continued fractions. More precisely, we show that the Lebesgue measure of the set for which the first $n$ partial quotients of continued fraction expansion provide a better approximation for $x$ than the first $n$ digits of $\beta$-expansion decreases to 0 geometrically as $n$ tends to $\infty$ if $\log\beta > {\pi^2}/{(6\log2)}$ and the case is converse when $\log\beta < {\pi^2}/{(6\log2)}$. Besides, we can also see that the decay rates are related to the multifractal analysis for the Lyapunov exponent of the Gauss transformation (see Remark \ref{3.6}).

\begin{theorem}\label{better approximation}
Let $\beta >1$ be a real number.\\
(i) If $\log\beta > {\pi^2}/{(6\log2)}$, then there exist two constants $B_1>0$ and $\gamma_1>0$ (both only depending on $\beta$) such that for all $n \geq 1$,
\[
\lambda\left\{x \in [0,1):\left|x - \frac{p_n}{q_n}\right| \leq x - x_n\right\} \leq B_1e^{-\gamma_1 n}.
\]
(ii) If $\log\beta < {\pi^2}/{(6\log2)}$, then there exist two constants $B_2>0$ and $\gamma_2>0$ (both only depending on $\beta$) such that for all $n \geq 1$,
\[
 \lambda\left\{x \in [0,1): x - x_n \leq \left|x - \frac{p_n}{q_n}\right|\right\} \leq B_2e^{-\gamma_2 n}.
\]
\end{theorem}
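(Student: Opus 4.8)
The plan is to turn both inequalities into a comparison between the exponential decay rates of the two errors. On the $\beta$-side, the identity $x-x_n=\beta^{-n}T_\beta^n x$ together with $T_\beta^n x\in[0,1)$ gives the deterministic control $0\le x-x_n<\beta^{-n}$; on the continued fraction side, the classical estimates $\frac{1}{2q_{n+1}^2}\le\abs{x-p_n/q_n}\le\frac{1}{q_n^2}$ reduce everything to the size of $q_n$. Since by L\'evy's theorem $\frac1n\log q_n\to\frac{\pi^2}{12\log2}$ almost everywhere, the $\beta$-error decays like $e^{-n\log\beta}$ while the continued fraction error decays like $e^{-n\pi^2/(6\log2)}$, and the sign of $\log\beta-\pi^2/(6\log2)$ decides which is smaller. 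The engine of the proof is therefore a large deviation estimate for $\frac1n\log q_n$ about the L\'evy constant $\frac{\pi^2}{12\log2}$: for every $\epsilon>0$ there are constants $D,\eta>0$ with $\lambda\{\abs{\frac1n\log q_n-\frac{\pi^2}{12\log2}}\ge\epsilon\}\le De^{-\eta n}$. This is exactly the type of estimate furnished by the thermodynamic formalism (Ruelle--Mayer transfer operator) for the Gauss map that already underlies Theorem \ref{error term}, and I would invoke it in its one-sided forms as a black box.

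For part (i), assume $\log\beta>\pi^2/(6\log2)$. Because $x-x_n<\beta^{-n}$, the event $\{\abs{x-p_n/q_n}\le x-x_n\}$ is contained in $\{\abs{x-p_n/q_n}<\beta^{-n}\}$, and the lower bound $\abs{x-p_n/q_n}>\frac{1}{2q_{n+1}^2}$ forces $q_{n+1}>\beta^{n/2}/\sqrt2$. Dividing logarithms by $n+1$, the threshold tends to $\frac12\log\beta$, which by hypothesis strictly exceeds the L\'evy constant $\frac{\pi^2}{12\log2}$. Hence for all large $n$ this is a one-sided (upper) large deviation event for $\frac1n\log q_{n+1}$ with a fixed positive gap, so the large deviation estimate yields $\lambda\{\abs{x-p_n/q_n}\le x-x_n\}\le B_1e^{-\gamma_1 n}$; enlarging $B_1$ absorbs the finitely many small $n$.

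For part (ii), assume $\log\beta<\pi^2/(6\log2)$ and fix $c$ with $\log\beta<c<\pi^2/(6\log2)$. Using the elementary inclusion $\{A\le B\}\subseteq\{A\le e^{-cn}\}\cup\{B\ge e^{-cn}\}$ with $A=x-x_n$ and $B=\abs{x-p_n/q_n}$, I split the set into two pieces. On the first piece, $x-x_n=\beta^{-n}T_\beta^n x\le e^{-cn}$ is equivalent to $T_\beta^n x\le e^{-n(c-\log\beta)}$; since the $T_\beta$-invariant measure $\mu$ satisfies $C^{-1}\lambda\le\mu\le C\lambda$, invariance gives $\lambda\{T_\beta^n x\le\delta\}\le C\mu(T_\beta^{-n}[0,\delta))=C\mu([0,\delta))\le C^2\delta$, so this piece has measure $\le C^2e^{-n(c-\log\beta)}$, which decays exponentially because $c>\log\beta$. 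On the second piece, $\abs{x-p_n/q_n}\le q_n^{-2}$ forces $q_n\le e^{cn/2}$; since $\frac{c}{2}<\frac{\pi^2}{12\log2}$, this is a one-sided (lower) large deviation event for $\frac1n\log q_n$, so the large deviation estimate again gives exponential decay. Adding the two bounds yields $\lambda\{x-x_n\le\abs{x-p_n/q_n}\}\le B_2e^{-\gamma_2 n}$.

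The main obstacle is the large deviation estimate for $\frac1n\log q_n$, and specifically its upper tail needed in part (i). The partial quotients are heavy-tailed, $\lambda\{a_1=m\}\asymp m^{-2}$, so the moment of $q_n^t$ (equivalently the pressure of the Gauss map at the potential $t\log\abs{T'}$) is finite only for $t<1$; consequently the cumulant generating function is non-steep at $t=1$ and the upper-tail rate function becomes linear for large arguments. What saves part (i) is that I only need a strictly positive rate at the single threshold $\frac12\log\beta>\frac{\pi^2}{12\log2}$, where the convex, lower-semicontinuous rate function vanishes only at the mean and is therefore strictly positive; this keeps $\gamma_1>0$ even though its value may come from the linear regime. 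Making this precise---either by a direct transfer-operator/Chernoff argument or by quoting the multifractal analysis of the Lyapunov exponent alluded to in Remark \ref{3.6}---is the only genuinely nontrivial step; the remaining reductions are the elementary bounds above.
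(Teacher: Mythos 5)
Your proposal is correct and follows essentially the same route as the paper: the reductions via $x-x_n=\beta^{-n}T_\beta^n x$, the bounds $\frac{1}{2q_{n+1}^2}\leq\left|x-\frac{p_n}{q_n}\right|\leq\frac{1}{q_n^2}$, and (in part (ii)) the splitting into a $\{T_\beta^n x\leq\delta\}$ piece handled by $\mu$-invariance plus the equivalence $C^{-1}\lambda\leq\mu\leq C\lambda$, together with a tail bound for $q_n$, are exactly the paper's Proposition 3.6. The one-sided large deviation estimates for $\frac{1}{n}\log q_n$ that you invoke as a black box are precisely what the paper derives on the spot: Markov's inequality applied to $q_n^{\pm 2t}$ combined with Lemma 3.1 (the pressure function asymptotics $\mathrm{P}(1\mp t)=\lim_n\frac1n\log\mathrm{E}(q_n^{\pm 2t})$), with strict negativity of the resulting exponents checked from $\mathrm{P}(1)=0$ and $\mathrm{P}'(1)=-\pi^2/(6\log 2)$ --- the same first-order argument you give for positivity of the rate at thresholds off the L\'evy constant.
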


By Borel-Cantelli lemma, we immediately obtain the following corollary.

\begin{corollary}
Let $\beta >1$ be a real number.\\
(i) If $\log\beta > {\pi^2}/{(6\log2)}$, then for $\lambda$-almost all $x \in [0,1)$, there exists positive integer $N_1$ (depending on $x$) such that for all $n \geq N_1$,
\[
\left|x - \frac{p_n(x)}{q_n(x)}\right| >  x - x_n.
\]
(ii) If $\log\beta < {\pi^2}/{(6\log2)}$, then for $\lambda$-almost all $x \in [0,1)$, there exists positive integer $N_2$ (depending on $x$) such that for all $n \geq N_2$,
\[
\left|x - \frac{p_n(x)}{q_n(x)}\right| <  x - x_n.
\]
\end{corollary}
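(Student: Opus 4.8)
The plan is to compare the two approximation errors through their exponential rates and to reduce each part to a one-sided large deviation estimate for $\frac1n\log q_n$ together with a control on how close $T_\beta^n x$ can come to $0$. Set $\theta:=\pi^2/(12\log2)$, so that by L\'evy's theorem $\frac1n\log q_n(x)\to\theta$ for $\lambda$-almost every $x$, and observe that the hypotheses of the two parts read $\log\beta>2\theta$ and $\log\beta<2\theta$ respectively. Two elementary facts drive everything. First, from $x=x_n+T_\beta^n x/\beta^n$ and $T_\beta^n x\in[0,1)$ we get $0\le x-x_n=T_\beta^n x\,\beta^{-n}<\beta^{-n}$. Second, if $t_{n+1}=a_{n+1}+1/(a_{n+2}+\cdots)\in[1,\infty)$ denotes the $(n{+}1)$-st complete quotient, then $|x-p_n/q_n|=\big(q_n(q_nt_{n+1}+q_{n-1})\big)^{-1}$, and since $q_{n+1}\le q_nt_{n+1}+q_{n-1}\le 2q_{n+1}$ this yields the standard two-sided bound $\frac{1}{2q_nq_{n+1}}\le|x-p_n/q_n|\le\frac{1}{q_nq_{n+1}}$.

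The first auxiliary input I would record is that $T_\beta^n x$ is rarely very small: since $\mu$ is $T_\beta$-invariant and satisfies \eqref{beta measure}, for every $\delta>0$,
\[
\lambda\{x:T_\beta^n x<e^{-\delta n}\}\le C\,\mu\{x:T_\beta^n x<e^{-\delta n}\}=C\,\mu\big([0,e^{-\delta n})\big)\le C^2e^{-\delta n}.
\]
The second input is the one-sided exponential large deviation estimate for $\frac1n\log q_n$: for every $\delta>0$ there are constants $A',\alpha'>0$ with $\lambda\{x:|\tfrac1n\log q_n-\theta|\ge\delta\}\le A'e^{-\alpha' n}$ for all $n$. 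This is the large deviation counterpart of L\'evy's theorem for the Gauss map, obtained from the thermodynamic formalism (Ruelle--Mayer transfer operator), and it is precisely the ingredient underlying Theorem \ref{error term}; I would simply cite it.

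For part (i), assume $\log\beta>2\theta$. Because $x-x_n<\beta^{-n}$, the event $|x-p_n/q_n|\le x-x_n$ forces $|x-p_n/q_n|<\beta^{-n}$, and then the lower bound $|x-p_n/q_n|\ge\frac{1}{2q_nq_{n+1}}\ge\frac{1}{2q_{n+1}^2}$ gives $q_{n+1}^2>\beta^n/2$, i.e. $\frac{1}{n+1}\log q_{n+1}>\frac{n\log\beta-\log2}{2(n+1)}$. Since the right-hand side tends to $\log\beta/2>\theta$, for all large $n$ it exceeds $\theta+\delta$ for a fixed $\delta>0$; hence the bad set is contained in $\{x:\frac{1}{n+1}\log q_{n+1}-\theta>\delta\}$, and the second input gives the claimed bound $B_1e^{-\gamma_1 n}$ (the finitely many small $n$, where the measure is at most $1$, are absorbed into the constant).

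For part (ii), assume $\log\beta<2\theta$ and fix $\delta>0$ with $\log\beta+\delta<2\theta$. Using $|x-p_n/q_n|\le\frac{1}{q_nq_{n+1}}\le q_n^{-2}$, the event $x-x_n\le|x-p_n/q_n|$ gives $T_\beta^n x\,\beta^{-n}\le q_n^{-2}$, that is $2\log q_n\le n\log\beta+\log(1/T_\beta^n x)$. I would split according to the size of $T_\beta^n x$: on $\{x:T_\beta^n x<e^{-\delta n}\}$ the first input bounds the measure by $C^2e^{-\delta n}$, while off this set $\log(1/T_\beta^n x)\le\delta n$, so $\frac1n\log q_n\le\frac{\log\beta+\delta}{2}<\theta$, a deviation below the mean handled by the second input. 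Adding the two contributions gives $B_2e^{-\gamma_2 n}$. The main obstacle throughout is the unboundedness of $\log|T'|$ behind the large deviation estimate for $\log q_n$; once that classical estimate is granted, the rest is the bookkeeping of the two rates $\log\beta$ and $2\theta=\pi^2/(6\log2)$ above, and the optimal exponents $\gamma_1,\gamma_2$ are governed by the large deviation rate function of $\frac1n\log q_n$, which is exactly the multifractal spectrum of the Gauss-map Lyapunov exponent alluded to in Remark \ref{3.6}.
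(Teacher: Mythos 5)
Your two measure estimates are correct, and their derivations run along essentially the same lines as the paper's proof of Proposition \ref{second}: the same comparison of $x-x_n=T_\beta^n x/\beta^n$ with $\beta^{-n}$ and with $q_n^{-2}$ via (\ref{Diophantine}), and in part (ii) the same splitting according to the size of $T_\beta^n x$, controlled by the $T_\beta$-invariance of $\mu$ together with (\ref{beta measure}). But as written you never prove the corollary itself. The statement asked for is an almost-everywhere assertion — the existence of $N_1(x)$ (resp.\ $N_2(x)$) beyond which the \emph{strict} inequality holds — and your argument stops at the bounds $\lambda(E_n)\le B_i e^{-\gamma_i n}$ for the exceptional sets $E_n$. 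The missing step is the first Borel--Cantelli lemma: since $\sum_n\lambda(E_n)<\infty$, we get $\lambda(\limsup_n E_n)=0$, so $\lambda$-a.e.\ $x$ belongs to only finitely many $E_n$, which is exactly the conclusion. This is not a pedantic complaint, because in the paper this one-line deduction \emph{is} the entire proof of the corollary: the exponential bounds are Theorem \ref{better approximation}, already proved, and the shortest correct blind proof would simply have cited that theorem and applied Borel--Cantelli rather than re-deriving the bounds.

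Concerning the re-derivation itself: where the paper obtains the exponential decay from the Kesseb\"{o}hmer--Stratmann pressure function (Lemma \ref{proofs} plus Markov's inequality with an optimization over the auxiliary parameter, yielding the explicit rates $\theta$ and $\theta^{\ast}$), you invoke as a black box a two-sided exponential large-deviation estimate for $\frac{1}{n}\log q_n$ around the L\'evy constant $\pi^2/(12\log 2)$. That estimate is true, and both tails are proved by exactly the Chernoff argument the paper runs: $\lambda\{q_n\ge e^{n(\theta+\delta)}\}\le e^{-2tn(\theta+\delta)}\mathrm{E}(q_n^{2t})$ for $0<t<1/2$ and $\lambda\{q_n\le e^{n(\theta-\delta)}\}\le e^{2tn(\theta-\delta)}\mathrm{E}(q_n^{-2t})$ for $t>0$, combined with Lemma \ref{proofs} and (\ref{ele}). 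But this estimate is nowhere stated in the paper and you give no precise reference, so your citation hides precisely the technical heart of the argument; a careful write-up should either locate it in the literature or derive it as above. The trade-off between the two packagings: yours is more modular and makes the mechanism transparent (everything reduces to comparing $\log\beta$ with $\pi^2/(6\log 2)$, the doubled L\'evy constant), while the paper's route is self-contained and produces the explicit decay rates $\theta,\theta^{\ast}$ together with their link to the multifractal spectrum of the Gauss-map Lyapunov exponent described in Remark \ref{3.6}.
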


\begin{remark}
Since $\log 10 < {\pi^2}/{(6\log2)}$, we know that the approximation of some real number by decimal expansion (i.e., $\beta =10$) is not better than the approximation by continued fraction expansion in the view of almost everywhere. This  result was obtained by Faivre \cite{lesFai97} in 1997. 
However, for the critical value $\beta = \exp({\pi^2}/{(6\log2)}) \approx 10.731$, our methods are invalid and we do not know which is the better approximation by $\beta$-expansion and continued fraction expansion.
\end{remark}

\section{Preliminary}
This section is devoted to recalling some definitions and basic properties of the $\beta$-expansion and continued fraction expansion.

\subsection{$\beta$-expansions}

We first state some notions and basic properties of $\beta$-expansion.

\begin{definition}
An $n$-block $(\varepsilon_1 ,\varepsilon_2, \cdots, \varepsilon_n)$ is said to be admissible for $\beta$-expansions if there exists $x \in [0,1)$ such that $\varepsilon_i(x) = \varepsilon_i$ for all $1 \leq i \leq n$. An infinite sequence $(\varepsilon_1 ,\varepsilon_2, \cdots, \varepsilon_n, \cdots)$ is admissible if $(\varepsilon_1 ,\varepsilon_2, \cdots, \varepsilon_n)$ is admissible for all $n \geq 1$.
\end{definition}

We denote by $\Sigma_\beta^n$ the collection of all admissible sequences of length $n$. The following result of R\'{e}nyi \cite{lesRen57} implies that the dynamical system ([0,1), $T_\beta$) admits $\log \beta$ as its topological entropy.

\begin{proposition}[\cite{lesRen57}]\label{Renyi}
Let $\beta >1$. For any $n \geq 1$,
\begin{equation*}
\beta^n \leq \sharp \Sigma_\beta^n \leq \beta^{n+1}/(\beta -1),
\end{equation*}
where $\sharp$ denotes the number of elements of a finite set.
\end{proposition}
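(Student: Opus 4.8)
The plan is to translate the combinatorial quantity $\sharp\Sigma_\beta^n$ into the geometry of the cylinders $J(\varepsilon_1,\dots,\varepsilon_n)$, since these are in bijection with the admissible words of length $n$ and partition $[0,1)$ into finitely many left-closed, right-open intervals. The one structural fact I would record first, by induction on $n$, is that on each cylinder $J=J(\varepsilon_1,\dots,\varepsilon_n)$ the map $T_\beta^n$ is affine with slope $\beta^n$ and carries $J$ onto an interval of the form $[0,c_J)$ with $0<c_J\le 1$; consequently $\lambda(J)=c_J\beta^{-n}$. The induction step is immediate: $T_\beta^n$ identifies $J$ with $[0,c_J)$, the choice of the next digit cuts this into pieces $[0,c_J)\cap[\varepsilon_{n+1}/\beta,(\varepsilon_{n+1}+1)/\beta)$, and one further application of $T_\beta$ (slope $\beta$) sends each such piece onto an interval again of the form $[0,c')$.

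Granting this, the lower bound is the easy half. Since $c_J\le 1$ we have $\lambda(J)\le\beta^{-n}$ for every level-$n$ cylinder, and because the cylinders partition $[0,1)$ their lengths sum to $1$; hence $1=\sum_J\lambda(J)\le \sharp\Sigma_\beta^n\cdot\beta^{-n}$, that is, $\sharp\Sigma_\beta^n\ge\beta^n$.

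For the upper bound I would set $N_n=\sharp\Sigma_\beta^n$ and derive the recursion $N_{n+1}\le N_n+\beta^{n+1}$. The number of admissible one-digit extensions of a fixed word $(\varepsilon_1,\dots,\varepsilon_n)$ equals the number of level-$1$ cylinders meeting $T_\beta^n(J)=[0,c_J)$, which is exactly $\lceil\beta c_J\rceil\le \beta c_J+1$. Summing over all level-$n$ cylinders and using $\sum_J c_J=\beta^n\sum_J\lambda(J)=\beta^n$ gives $N_{n+1}\le \beta\sum_J c_J+N_n=\beta^{n+1}+N_n$. Starting from $N_0=1$ and telescoping yields $N_n\le 1+\sum_{k=1}^n\beta^k=(\beta^{n+1}-1)/(\beta-1)<\beta^{n+1}/(\beta-1)$, as required.

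The step I expect to be the real obstacle — and the reason a naive argument fails — is the upper bound, precisely because individual cylinders can be arbitrarily short (there is no absolute lower bound on $\lambda(J)$, exactly the difficulty emphasized in the introduction), so one cannot control $N_n$ by comparing each cylinder length to a fixed scale. The device that rescues the argument is to count globally rather than cylinder-by-cylinder: the only loss per cylinder in the child-count estimate $\lceil\beta c_J\rceil\le\beta c_J+1$ is the additive $+1$, while summing the genuine factors $\beta c_J$ collapses to the clean identity $\beta\sum_J c_J=\beta^{n+1}$, so the total overhead accumulated over all $n$ levels is merely the geometric series $\sum_{k=1}^n\beta^k$.
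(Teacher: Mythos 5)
Your proof is correct, but there is no in-paper argument to compare it against: Proposition \ref{Renyi} is stated in the paper with a citation to R\'enyi \cite{lesRen57} and no proof is given, so your write-up stands as a self-contained derivation of a quoted result. Judged on its own, the key steps check out. The structural fact you induct on --- that $T_\beta^n$ maps each level-$n$ cylinder $J$ affinely with slope $\beta^n$ onto an interval $[0,c_J)$ with $0<c_J\le 1$, so that $\lambda(J)=c_J\beta^{-n}$ --- is the same geometry that underlies Remark \ref{beta cylinder} and Proposition \ref{zhongyao1} in the paper. The lower bound is the classical covering argument. For the upper bound, your extension count is right: a digit $\varepsilon$ admissibly extends the word indexing $J$ precisely when the level-one cylinder $J(\varepsilon)$ meets $[0,c_J)$, i.e.\ when $\varepsilon<\beta c_J$, giving exactly $\lceil\beta c_J\rceil\le\beta c_J+1$ extensions (and since $c_J\le 1$, every such $\varepsilon$ is a legitimate digit). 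Because each admissible $(n+1)$-word has a unique admissible $n$-prefix, summing over the level-$n$ cylinders and using the identity $\sum_J c_J=\beta^n\sum_J\lambda(J)=\beta^n$ gives $N_{n+1}\le N_n+\beta^{n+1}$, and telescoping from $N_0=1$ yields $N_n\le(\beta^{n+1}-1)/(\beta-1)<\beta^{n+1}/(\beta-1)$. Your closing diagnosis is also accurate and worth keeping: since individual cylinders admit no nontrivial uniform lower bound on their lengths (exactly the difficulty the paper emphasizes for general $\beta$), no cylinder-by-cylinder comparison with the scale $\beta^{-n}$ can produce the upper bound; the global identity $\sum_J c_J=\beta^n$ is what replaces it, with the additive $+1$ per cylinder as the only loss, accumulating to the geometric factor $\beta/(\beta-1)$.
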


\begin{definition}
Let $(\varepsilon_1 ,\varepsilon_2, \cdots, \varepsilon_n)\in \Sigma_\beta^n$. We define
\[
J(\varepsilon_1 ,\varepsilon_2, \cdots, \varepsilon_n) = \{x \in [0,1):\varepsilon_i(x)=\varepsilon_i \ \text{for all} \ 1 \leq i \leq n\}
\]
and call it the $n$-th cylinder of $\beta$-expansion. In other words, it is the set of points beginning with $(\varepsilon_1 ,\varepsilon_2, \cdots, \varepsilon_n)$ in their $\beta$-expansions. For any real number $x \in [0,1)$, $J(\varepsilon_1(x) ,\varepsilon_2(x), \cdots, \varepsilon_n(x))$ is said to be the $n$-th cylinder containing $x$.
\end{definition}

\begin{remark}\label{beta cylinder}
The cylinder $J(\varepsilon_1 ,\varepsilon_2, \cdots, \varepsilon_n)$ is a left-closed and right-open interval with left endpoint
\begin{equation*}
\frac{\varepsilon_1}{\beta} + \frac{\varepsilon_2}{\beta^2} + \cdots + \frac{\varepsilon_n}{\beta^n}.
\end{equation*}
Moreover, the length of $J(\varepsilon_1 ,\varepsilon_2, \cdots, \varepsilon_n)$ satisfies
$|J(\varepsilon_1 ,\varepsilon_2, \cdots, \varepsilon_n)| \leq 1/\beta^n$.
\end{remark}

For any $x \in [0,1)$ and $n \geq 1$, we
assume that $(\varepsilon_1(x), \varepsilon_2(x), \cdots, \varepsilon_n(x), \cdots)$ is the $\beta$-expansion of $x$ and define
\begin{equation}\label{l-n}
l_n(x) = \sup\left\{k \geq 0: \varepsilon_{n +j}(x) =0 \ \text{for all}\ 1 \leq j \leq k\right\}.
\end{equation}
That is, the length of the longest string of zeros just after the $n$-th digit in the $\beta$-expansion of $x$. The following proposition gives a lower bound, which is not absolute and related to $l_n(x)$.

\begin{proposition}[\cite{lesFWL}]\label{zhongyao1}
Let $\beta >1$. Then for any $x \in [0,1)$ and $n \geq 1$,
\begin{equation*}
\frac{1}{\beta^{n+l_n(x)+1}}\leq |J(\varepsilon_1(x) ,\varepsilon_2(x), \cdots, \varepsilon_n(x))| \leq \frac{1}{\beta^n},
\end{equation*}
where $l_n(x)$ is defined as (\ref{l-n}).
\end{proposition}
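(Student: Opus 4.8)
The upper bound is already contained in Remark \ref{beta cylinder}, so the plan is to concentrate on the lower bound. The natural starting point is the explicit description of the cylinder furnished by Remark \ref{beta cylinder}: the set $J := J(\varepsilon_1(x),\ldots,\varepsilon_n(x))$ is a left-closed, right-open interval whose left endpoint is $\omega := \sum_{i=1}^{n}\varepsilon_i(x)/\beta^i$. Since $x$ lies in its own $n$-th cylinder, we have $\omega \le x < \omega + |J|$, and in particular $|J| > x-\omega$. It therefore suffices to bound $x-\omega$ from below, and the whole point of introducing $l_n(x)$ is to make this bound explicit.

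The key is to read $x-\omega$ directly off the $\beta$-expansion of $x$ and to exploit the block of zeros recorded by $l_n(x)$. Write $l := l_n(x)$. If $l=\infty$ the asserted lower bound degenerates to $0$ and there is nothing to prove, so assume $l<\infty$. By the definition (\ref{l-n}) of $l_n(x)$ we then have $\varepsilon_{n+1}(x)=\cdots=\varepsilon_{n+l}(x)=0$ while $\varepsilon_{n+l+1}(x)\ge 1$. Substituting the series (\ref{beta expansion}) for $x$, the first $n$ terms cancel against $\omega$ and the next $l$ terms vanish, leaving
\[
x-\omega = \sum_{i=n+l+1}^{\infty}\frac{\varepsilon_i(x)}{\beta^i} \ge \frac{\varepsilon_{n+l+1}(x)}{\beta^{\,n+l+1}} \ge \frac{1}{\beta^{\,n+l+1}},
\]
where the last step uses only that a nonzero digit is at least $1$ and that all remaining digits are nonnegative. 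Combining this with $|J|>x-\omega$ gives $|J| > 1/\beta^{\,n+l_n(x)+1}$, which in particular yields the desired (non-strict) lower bound.

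The one point that genuinely requires care — and essentially the only place where the theory of $\beta$-expansions enters — is the identification of the left endpoint of $J$ with $\omega$, together with the implicit fact that $\omega$ itself belongs to $J$ (equivalently, that the first $n$ digits of the $\beta$-expansion of $\omega$ are exactly $\varepsilon_1(x),\ldots,\varepsilon_n(x)$, followed by zeros). This is precisely the structural content we borrow from Remark \ref{beta cylinder}. I do not expect any serious obstacle beyond this: once that endpoint description is granted, the estimate is an elementary manipulation of the defining series (\ref{beta expansion}) and does not require engaging with the delicate admissibility conditions governing $\Sigma_\beta^n$. It is worth emphasizing that the bound is deliberately \emph{not} absolute — the term $l_n(x)$ can be large for ``bad'' $\beta$ — and it is exactly this dependence on $l_n(x)$ that will have to be controlled when the proposition is applied to prove Theorems \ref{error term} and \ref{better approximation}.
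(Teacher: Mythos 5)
Your proof is correct and follows essentially the same route as the paper: both identify the left endpoint $\omega_n(x)$ of the cylinder, use $x\in J(\varepsilon_1(x),\ldots,\varepsilon_n(x))$ to get $x-\omega_n(x)\le |J|$, and then bound $x-\omega_n(x)$ from below by $1/\beta^{n+l_n(x)+1}$ using the block of zeros and the first nonzero digit after position $n$. The only (harmless) additions on your side are the explicit treatment of the degenerate case $l_n(x)=\infty$ and the strict form of the inequality, neither of which changes the argument.
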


\begin{proof}
For any $x \in [0,1)$ and $n \geq 1$, we know that $J(\varepsilon_1(x) ,\varepsilon_2(x), \cdots, \varepsilon_n(x))$ is a left-closed and right-open interval with left endpoint
\begin{equation*}
\omega_n(x):= \frac{\varepsilon_1(x)}{\beta} + \frac{\varepsilon_2(x)}{\beta^2} + \cdots + \frac{\varepsilon_n(x)}{\beta^n}
\end{equation*}
and its length satisfies $|J(\varepsilon_1(x) ,\varepsilon_2(x), \cdots, \varepsilon_n(x))| \leq 1/\beta^n.$

Since $x \in J(\varepsilon_1(x) ,\varepsilon_2(x), \cdots, \varepsilon_n(x))$, we have that
\begin{equation*}
\frac{1}{\beta^{n+l_n(x)+1}} \leq x - \omega_n(x) \leq |J(\varepsilon_1(x) ,\varepsilon_2(x), \cdots, \varepsilon_n(x))|,
\end{equation*}
where the first inequality follows from $\varepsilon_{n+1}(x) = \cdots = \varepsilon_{n+l_n(x)}(x) = 0$ and $\varepsilon_{n+l_n(x)+1}(x) \geq 1$ by the definition of $l_n(x)$ in (\ref{l-n}). This completes the proof.
\end{proof}

\subsection{Continued fractions}

Let us now introduce some elementary properties of continued fractions. For any irrational number $x \in [0,1)$ and $n \geq 1$, with the conventions $p_{-1}=1$, $q_{-1}=0$, $p_0=0$, $q_0=1$, the quantities $p_n$ and $q_n$ satisfy the following recursive formula:
\begin{equation}\label{recursive}
p_n(x) = a_n(x) p_{n-1}(x) + p_{n-2}(x)\ \ \ \text{and}\ \ \ q_n(x) = a_n(x) q_{n-1}(x) + q_{n-2}(x).
\end{equation}
By the above recursive formula of $p_n$ and $q_n$, we can easily obtain that
\begin{equation}\label{Diophantine}
\frac{1}{2q_{n+1}^2(x)} \leq \left|x - \frac{p_n(x)}{q_n(x)}\right| \leq \frac{1}{q_n^2(x)}.
\end{equation}
This is to say that the speed of $p_n(x)/q_n(x)$ approximating to $x$ is dominated by $q_n^{-2}(x)$. So the denominator of the $n$-th convergent $q_n(x)$ plays an important role in the problem of Diophantine approximation.

\begin{definition}\label{cylinder}
For any $n \geq 1$ and $a_1, a_2, \cdots, a_n \in \mathbb{N}$, we call
\begin{equation*}
I(a_1, \cdots, a_n):= \left\{x \in [0,1): a_1(x)=a_1, \cdots, a_n(x)=a_n\right\}
\end{equation*}
the $n$-th cylinder of continued fraction expansion. For any real number $x \in [0,1)$, $I(a_1(x) ,a_2(x), \cdots, a_n(x))$ is said to be the $n$-th cylinder containing $x$.
\end{definition}

The following proposition is about the structure and length of cylinders.

\begin{proposition}[\cite{lesD.K02}]
Let $a_1, a_2, \cdots, a_n \in \mathbb{N}$. Then $I(a_1, \cdots, a_n)$ is an interval with two endpoints
\[
\frac{p_n}{q_n}\ \ \ \ \ \text{and}\ \ \ \ \  \frac{p_n+p_{n-1}}{q_n+q_{n-1}}
\]
and the length of $I(a_1, \cdots, a_n)$ satisfies
\begin{equation}\label{cylinder inequality}
\frac{1}{2q_n^2}\leq |I(a_1, \cdots, a_n)| = \frac{1}{q_n(q_n+q_{n-1})} \leq \frac{1}{q_n^2},
\end{equation}
where $p_n$ and $q_n$ satisfy the recursive formula (\ref{recursive}).
\end{proposition}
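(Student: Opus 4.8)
The plan is to parametrize the cylinder $I(a_1, \ldots, a_n)$ by the continued-fraction tail and then read off its endpoints and length from the behaviour of the resulting M\"obius map. First I would note that for irrational $x \in I(a_1, \ldots, a_n)$, writing $t = T^n x \in [0,1)$ and recalling that $1/t = [a_{n+1}; a_{n+2}, \ldots] \geq 1$ is the tail, the truncation identity gives
\[
x = \frac{(1/t)\,p_n + p_{n-1}}{(1/t)\,q_n + q_{n-1}} = \frac{p_n + p_{n-1}\,t}{q_n + q_{n-1}\,t},
\]
where $p_n, q_n, p_{n-1}, q_{n-1}$ depend only on $a_1, \ldots, a_n$ and hence are constant on the cylinder. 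Conversely every $t \in [0,1)$ arises this way, so $I(a_1, \ldots, a_n)$ is exactly the image of $[0,1)$ under the map $\varphi(t) = (p_n + p_{n-1}t)/(q_n + q_{n-1}t)$.

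Next I would show $\varphi$ is monotone, so that its image is an interval with endpoints $\varphi(0)$ and $\varphi(1)$. Differentiating,
\[
\varphi'(t) = \frac{p_{n-1}q_n - p_n q_{n-1}}{(q_n + q_{n-1}t)^2},
\]
and the numerator equals $(-1)^n$ by the determinant identity $p_n q_{n-1} - p_{n-1} q_n = (-1)^{n-1}$, which follows by an immediate induction from the recursion (\ref{recursive}). Thus $\varphi'$ has constant sign, $\varphi$ is strictly monotone, and the two endpoints are $\varphi(0) = p_n/q_n$ and $\varphi(1) = (p_n + p_{n-1})/(q_n + q_{n-1})$, as claimed.

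Then I would compute the length directly as the absolute difference of the endpoints:
\[
|I(a_1, \ldots, a_n)| = \left| \frac{p_n + p_{n-1}}{q_n + q_{n-1}} - \frac{p_n}{q_n} \right| = \frac{|p_{n-1}q_n - p_n q_{n-1}|}{q_n(q_n + q_{n-1})} = \frac{1}{q_n(q_n + q_{n-1})},
\]
again using the determinant identity. Finally, the two-sided bound is purely arithmetic: since $a_n \geq 1$, the recursion (\ref{recursive}) forces $0 \leq q_{n-1} \leq q_n$, whence $q_n^2 \leq q_n(q_n + q_{n-1}) \leq 2q_n^2$, and taking reciprocals yields $1/(2q_n^2) \leq |I(a_1, \ldots, a_n)| \leq 1/q_n^2$.

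I do not expect any serious obstacle here, as the result is classical. The only points requiring care are the bookkeeping for the determinant identity (handled by induction on the recursion) and the treatment of the endpoints: whether the interval is left- or right-closed depends on the parity of $n$ and on the boundary conventions for $t$, but since only the length enters the later arguments, this ambiguity is harmless.
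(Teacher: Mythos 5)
Your proof is correct. The paper does not prove this proposition at all---it is quoted from Dajani and Kraaikamp \cite{lesD.K02}---and your argument is exactly the classical one given there: the M\"obius parametrization $\varphi(t)=(p_n+p_{n-1}t)/(q_n+q_{n-1}t)$ of the cylinder by the tail $t=T^nx$, the determinant identity $p_nq_{n-1}-p_{n-1}q_n=(-1)^{n-1}$ (immediate induction from (\ref{recursive})) yielding both the strict monotonicity of $\varphi$ and the exact length $1/\bigl(q_n(q_n+q_{n-1})\bigr)$, and the inequality $0\le q_{n-1}\le q_n$ giving the two-sided bound; your closing caveat that which endpoint is included depends on the parity of $n$ is the right one, and it is indeed harmless since only the length enters the paper's later arguments.
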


\section{Proofs of theorems}

In the following, we denote by $\mathbb{I}$ the set of all irrational numbers in $[0,1)$ and use the notation $\mathrm{E}(\xi)$ to denote the expectation of a random variable $\xi$ w.r.t. the Lebesgue measure $\lambda$. For any $\theta > 1/2$, we define the so called \emph{Diophantine pressure function} (see Kesseb\"{o}hmer and Stratmann \cite{lesKS07}) as
\[
\mathrm{P}(\theta): = \lim_{n \to \infty}\frac{1}{n} \log \sum_{a_1,\cdots,a_n} q_n^{-2\theta}([a_1,\cdots,a_n]).
\]
Kesseb\"{o}hmer and Stratmann \cite{lesKS07} proved that the Diophantine pressure function has a singularity at $1/2$ and is decreasing, convex and real-analytic on $(1/2,+\infty)$ satisfying
\begin{equation}\label{ele}
\mathrm{P}(1)=0\ \ \ \ \text{and}\ \ \ \  \mathrm{P}^\prime(1)=-\pi^2/(6\log 2).
\end{equation}
Furthermore, they also studied the multifractal analysis for the Lyapunov exponent of the Gauss transformation $T$ by using this function (see also Pollicott and Weiss \cite{lesPW99} and Fan et al.~\cite{lesF.L.W.W} and \cite{lesF.L.W.W13}). More detailed analysis of this function can be founded in Mayer \cite{lesMay90}. The following lemma establishes the relation between this function and the growth of the expectation of $q_n$, which plays an important role in our proofs.

\begin{lemma}\label{proofs}
For any $\theta <1/2$,
\[
\mathrm{P}(1-\theta) = \lim_{n \to \infty}\frac{1}{n} \log \mathrm{E}\left(q_n^{2\theta}\right).
\]
\end{lemma}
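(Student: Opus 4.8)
The plan is to exploit the fact that the denominator $q_n$ is constant on each continued-fraction cylinder and to use the two-sided length bound \eqref{cylinder inequality} to sandwich $\mathrm{E}(q_n^{2\theta})$ between two constant multiples of the very sum $\sum_{a_1,\cdots,a_n} q_n^{-2(1-\theta)}$ that defines $\mathrm{P}(1-\theta)$. First I would observe that, by the recursion \eqref{recursive}, the quantity $q_n(x)$ depends only on the first $n$ partial quotients $a_1(x),\cdots,a_n(x)$; hence $q_n^{2\theta}$ is constant on each cylinder $I(a_1,\cdots,a_n)$. Since these cylinders partition $\mathbb{I}$, which carries full Lebesgue measure, the expectation splits as
\[
\mathrm{E}(q_n^{2\theta}) = \int_0^1 q_n^{2\theta}\,d\lambda = \sum_{a_1,\cdots,a_n} q_n^{2\theta}\,|I(a_1,\cdots,a_n)|,
\]
where $q_n=q_n(a_1,\cdots,a_n)$ on the right-hand side.

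Next I would insert the cylinder-length estimate \eqref{cylinder inequality}, namely $\tfrac{1}{2q_n^2}\le |I(a_1,\cdots,a_n)| \le \tfrac{1}{q_n^2}$, into each summand. Using $q_n^{2\theta}\cdot q_n^{-2}=q_n^{-2(1-\theta)}$, this yields the chain
\[
\frac12 \sum_{a_1,\cdots,a_n} q_n^{-2(1-\theta)} \le \mathrm{E}(q_n^{2\theta}) \le \sum_{a_1,\cdots,a_n} q_n^{-2(1-\theta)}.
\]
Finally I would apply $\tfrac1n\log(\cdot)$ to the whole chain and let $n\to\infty$. The multiplicative constant $\tfrac12$ contributes only $\tfrac1n\log\tfrac12\to 0$, so both the lower and the upper bound converge to
\[
\lim_{n\to\infty}\frac1n\log\sum_{a_1,\cdots,a_n} q_n^{-2(1-\theta)} = \mathrm{P}(1-\theta),
\]
and the squeeze forces $\tfrac1n\log\mathrm{E}(q_n^{2\theta})$ to converge to the same value.

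The one hypothesis that must be invoked explicitly is $\theta<1/2$, equivalently $1-\theta>1/2$: this places the argument $1-\theta$ inside the interval $(1/2,+\infty)$ on which $\mathrm{P}$ is defined and real-analytic (recall it has a singularity at $1/2$), so that the limit defining $\mathrm{P}(1-\theta)$ genuinely exists; the sandwich then transfers both existence and value to $\tfrac1n\log\mathrm{E}(q_n^{2\theta})$. In truth there is no hard analytic step here — the entire argument reduces to the constancy of $q_n$ on cylinders together with the distortion bound \eqref{cylinder inequality}. The only point requiring care is precisely this domain restriction $\theta<1/2$, which is what guarantees the summed quantity has a well-defined exponential growth rate rather than failing to converge; outside this range the pressure limit need not exist and the identity loses its meaning.
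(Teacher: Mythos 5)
Your proposal is correct and follows essentially the same route as the paper's own proof: write $\mathrm{E}(q_n^{2\theta})$ as a sum over cylinders, sandwich it via the length estimate (\ref{cylinder inequality}) between $\tfrac12\sum q_n^{-2(1-\theta)}$ and $\sum q_n^{-2(1-\theta)}$, and pass to the exponential growth rate. Your explicit remark that $\theta<1/2$ is exactly what places $1-\theta$ in the domain $(1/2,+\infty)$ where the pressure limit exists is a point the paper leaves implicit, so no gap remains.
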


\begin{proof}
By the definition of expectation, we know that
\begin{equation}\label{qiwang}
E\left(q_n^{2\theta}\right) = \sum_{a_1,\cdots,a_n} q_n^{2\theta}([a_1,\cdots,a_n])\cdot \lambda\left(I(a_1,\cdots,a_n)\right),
\end{equation}
where $a_1,\cdots,a_n$ run over all the positive integers. In view of (\ref{cylinder inequality}), we have that
\begin{equation*}
\frac{1}{2q_n^2([a_1,\cdots,a_n])}\leq \lambda\left(I(a_1,\cdots,a_n)\right) \leq \frac{1}{q_n^2([a_1,\cdots,a_n])}.
\end{equation*}
Combing this with (\ref{qiwang}), we deduce that
\[
\frac{1}{2}\cdot \sum_{a_1,\cdots,a_n} q_n^{-2(1-\theta)}([a_1,\cdots,a_n]) \leq E\left(q_n^\theta\right) \leq \sum_{a_1,\cdots,a_n} q_n^{-2(1-\theta)}([a_1,\cdots,a_n])
\]
and hence that
\[
\mathrm{P}(1-\theta) = \lim_{n \to \infty}\frac{1}{n} \log \mathrm{E}\left(q_n^{2\theta}\right) \ \ \ \text{for any $\theta <1/2$}.
\]
\end{proof}

\subsection{Proof of Theorem \ref{error term}}
Recall that
\[
k_n(x) = \sup\left\{m \geq 0: J(\varepsilon_1(x),\cdots, \varepsilon_n(x)) \subseteq I(a_1(x), \cdots, a_m(x))\right\},
\]
we have the following lemma.

\begin{lemma}\label{jihe}
Let $m \geq 1$ be an integer. Then
\[
\left\{x \in \mathbb{I} : k_n(x) \geq m\right\} = \left\{x \in \mathbb{I}: J(\varepsilon_1(x), \cdots, \varepsilon_n(x)) \subseteq I(a_1(x), \cdots, a_m(x))\right\}.
\]
\end{lemma}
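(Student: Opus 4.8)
The plan is to unwind the supremum defining $k_n(x)$ and show that, for each fixed irrational $x$ and each fixed $n$, the set of admissible orders
\[
S_n(x) := \left\{m \geq 0 : J(\varepsilon_1(x), \cdots, \varepsilon_n(x)) \subseteq I(a_1(x), \cdots, a_m(x))\right\}
\]
is a finite initial segment $\{0, 1, \ldots, k_n(x)\}$ of the nonnegative integers. Granting this, both inclusions in the lemma fall out immediately. If $k_n(x) \geq m$, then since $S_n(x)$ is an initial segment containing its maximum $k_n(x)$, we get $m \in S_n(x)$, which is exactly the containment $J(\varepsilon_1(x), \cdots, \varepsilon_n(x)) \subseteq I(a_1(x), \cdots, a_m(x))$ on the right. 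Conversely, if that containment holds then $m \in S_n(x)$, whence $m \leq \sup S_n(x) = k_n(x)$.

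So the real content is the structural claim about $S_n(x)$, which I would split into three observations. First, $S_n(x)$ is nonempty: for $m = 0$ there is no constraint on the partial quotients and the corresponding order-$0$ cylinder is all of $[0,1)$, so the inclusion is trivial and $0 \in S_n(x)$. Second, and most importantly, $S_n(x)$ is downward closed. This rests on the monotone nesting of the continued-fraction cylinders containing $x$,
\[
I(a_1(x), \cdots, a_{m+1}(x)) \subseteq I(a_1(x), \cdots, a_m(x)) \qquad (m \geq 0),
\]
which holds because prescribing one further partial quotient only shrinks the cylinder. Consequently, if $m \in S_n(x)$ and $0 \leq m' \leq m$, then $J(\varepsilon_1(x), \cdots, \varepsilon_n(x)) \subseteq I(a_1(x), \cdots, a_m(x)) \subseteq I(a_1(x), \cdots, a_{m'}(x))$, so $m' \in S_n(x)$.

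Third, I would show $S_n(x)$ is finite, so that $k_n(x) = \sup S_n(x)$ is actually attained. Here I would compare lengths: the $\beta$-cylinder $J(\varepsilon_1(x), \cdots, \varepsilon_n(x))$ is a nondegenerate interval, its length being at least $\beta^{-(n+l_n(x)+1)} > 0$ by Proposition \ref{zhongyao1}, whereas by (\ref{cylinder inequality}) the length of $I(a_1(x), \cdots, a_m(x))$ is at most $q_m^{-2}(x)$, which tends to $0$ as $m \to \infty$ since $q_m(x) \to \infty$. Thus for all sufficiently large $m$ the cylinder $I(a_1(x), \cdots, a_m(x))$ is strictly shorter than $J(\varepsilon_1(x), \cdots, \varepsilon_n(x))$ and cannot contain it, so $m \notin S_n(x)$. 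Together with nonemptiness and downward closure, this forces $S_n(x) = \{0, 1, \ldots, k_n(x)\}$ and completes the argument.

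I do not anticipate a genuine obstacle: the statement is essentially a repackaging of the definition of $k_n(x)$. The only points requiring care are the monotone nesting of the continued-fraction cylinders, which is what makes $S_n(x)$ an initial segment rather than an arbitrary set of integers, and the fact that these cylinders shrink to the single point $x$ while $J$ stays nondegenerate, which is what guarantees the supremum is attained so that ``$k_n(x) \geq m$'' and ``$m \in S_n(x)$'' coincide.
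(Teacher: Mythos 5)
Your proof is correct and is essentially the paper's own reasoning: the paper states this lemma without proof, treating it as an immediate unwinding of the definition of $k_n(x)$, and your initial-segment argument (nonemptiness, downward closure via the nesting $I(a_1(x),\cdots,a_{m+1}(x)) \subseteq I(a_1(x),\cdots,a_m(x))$, finiteness so the supremum is attained) is exactly that routine verification.

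One small repair is worth making: your positivity claim $|J(\varepsilon_1(x),\cdots,\varepsilon_n(x))| \geq \beta^{-(n+l_n(x)+1)} > 0$ via Proposition \ref{zhongyao1} is vacuous when $l_n(x) = \infty$, which can genuinely occur for irrational $x$ when $\beta$ is irrational (e.g.\ $\beta$ the golden mean and $x = 1/\beta$, whose $\beta$-expansion is $(1,0,0,\cdots)$, so every $l_n(x)$ is infinite). This costs nothing: positive length follows directly from Remark \ref{beta cylinder}, since the cylinder is a nonempty left-closed, right-open interval. Alternatively you can dispense with finiteness altogether: for a downward-closed set $S_n(x)$ of nonnegative integers, $\sup S_n(x) \geq m$ already forces some integer $m' \geq m$ to lie in $S_n(x)$ (else the supremum would be at most $m-1$), and downward closure then gives $m \in S_n(x)$.
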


To prove Theorem \ref{error term}, we will show a stronger result.

\begin{proposition}\label{first}
Let $a = (6\log2\log\beta)/\pi^2$. Then for any $\varepsilon >0$,
\begin{equation}\label{he}
\limsup_{n \to \infty} \frac{1}{n}\log \lambda \left\{x \in \mathbb{I}:\frac{k_n(x)}{n} \geq a + \varepsilon\right\} \leq \theta_1(\varepsilon)
\end{equation}
with
\[
\theta_1(\varepsilon) = \inf_{t>0} \left\{\frac{1}{t+1}\Big(t\log\beta + (a + \varepsilon)\mathrm{P}(t+ 1)\Big)\right\}< 0
\]
and for any $0<\varepsilon<a$,
\begin{equation}\label{cha}
\limsup_{n \to \infty} \frac{1}{n}\log \lambda \left\{x \in \mathbb{I}:\frac{k_n(x)}{n} \leq a - \varepsilon\right\} \leq \theta_2(\varepsilon)
\end{equation}
with
\[
\theta_2(\varepsilon) = \inf_{0 <t< 1/2} \big\{-t\log\beta + (a - \varepsilon)\mathrm{P}(1-t )\big\}< 0.
\]
\end{proposition}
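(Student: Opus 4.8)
The plan is to prove both tail estimates through a Chebyshev/Markov argument, turning the geometric condition defining $k_n$ into a condition on the denominators $q_m$ of the continued fraction convergents, to which Lemma \ref{proofs} applies. By Lemma \ref{jihe}, the event $\{k_n \geq m\}$ is exactly $\{J(\varepsilon_1(x),\dots,\varepsilon_n(x)) \subseteq I(a_1(x),\dots,a_m(x))\}$; throughout I write $J_n(x)$ and $I_m(x)$ for the two cylinders containing $x$, and the whole argument rests on comparing the lengths of these nested intervals. Writing $m=\lceil(a+\varepsilon)n\rceil$ for the upper estimate and $m=\lfloor(a-\varepsilon)n\rfloor$ for the lower one, I will use that $\frac1n\log \mathrm{E}(q_m^{2\theta})\to \frac mn\,\mathrm{P}(1-\theta)$, which tends to $(a+\varepsilon)\mathrm{P}(t+1)$ when $\theta=-t$ and to $(a-\varepsilon)\mathrm{P}(1-t)$ when $\theta=t$; the final optimization over $t$ then yields $\theta_1$ and $\theta_2$.

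For the upper estimate (\ref{he}), if $k_n(x)\geq m$ then $J_n(x)\subseteq I_m(x)$, so $|J_n(x)|\leq|I_m(x)|$. Combining the lower bound $|J_n(x)|\geq\beta^{-(n+l_n(x)+1)}$ from Proposition \ref{zhongyao1} with the upper bound $|I_m(x)|\leq q_m^{-2}(x)$ from (\ref{cylinder inequality}) gives $q_m^2(x)\leq\beta^{\,n+l_n(x)+1}$. The nuisance is that this bound involves the random quantity $l_n$, so I split according to the length of the longest run of zeros: fix an integer $L$ and write $\{k_n\geq m\}\subseteq\{l_n>L\}\cup\{q_m^2\leq\beta^{\,n+L+1}\}$. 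The first set is controlled by the $T_\beta$-invariance of $\mu$ and (\ref{beta measure}): since $\{l_n>L\}=\{T_\beta^n x\in[0,\beta^{-(L+1)})\}$, its measure is at most $C^2\beta^{-(L+1)}$. On the second set Chebyshev's inequality with exponent $t>0$ gives $\lambda\{q_m^2\leq\beta^{\,n+L+1}\}\leq\beta^{\,t(n+L+1)}\mathrm{E}(q_m^{-2t})$. Taking $L=\lfloor\ell n\rfloor$ and letting $n\to\infty$, the two contributions have exponential rates $t\log\beta\,(1+\ell)+(a+\varepsilon)\mathrm{P}(t+1)$ and $-\ell\log\beta$; choosing $\ell$ to balance them (which is possible precisely because $t\log\beta+(a+\varepsilon)\mathrm{P}(t+1)<0$ for small $t$) produces the common rate $\frac{1}{t+1}\big(t\log\beta+(a+\varepsilon)\mathrm{P}(t+1)\big)$, and the infimum over $t>0$ is $\theta_1(\varepsilon)$.

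For the lower estimate (\ref{cha}) I use the clean upper bound $|J_n(x)|\leq\beta^{-n}$ instead. If $k_n(x)\leq m$ then $J_n(x)\not\subseteq I_{m+1}(x)$, so the interval $J_n(x)$, which contains $x$, must cross an endpoint of $I_{m+1}(x)$; hence $x$ lies within distance $|J_n(x)|\leq\beta^{-n}$ of $\partial I_{m+1}(x)$. Decomposing over the level-$(m+1)$ cylinders $I$ gives $\lambda\{k_n\leq m\}\leq\sum_I\min\{2\beta^{-n},|I|\}$. I then split the sum according to whether $|I|>2\beta^{-n}$ or $|I|\leq 2\beta^{-n}$; using $\tfrac1{2}q_{m+1}^{-2}\leq|I|\leq q_{m+1}^{-2}$ from (\ref{cylinder inequality}), in both regimes the summand is bounded by a constant multiple of $\beta^{-nt}|I|\,q_{m+1}^{2t}$ for any $0<t<1/2$, so that
\[
\lambda\{k_n\leq m\}\leq C_t\,\beta^{-nt}\sum_I|I|\,q_{m+1}^{2t}=C_t\,\beta^{-nt}\,\mathrm{E}\big(q_{m+1}^{2t}\big).
\]
By Lemma \ref{proofs} the exponential rate is $-t\log\beta+(a-\varepsilon)\mathrm{P}(1-t)$, and the infimum over $0<t<1/2$ is $\theta_2(\varepsilon)$.

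Finally, the strict negativity of both rates follows from the normalization (\ref{ele}): since $\mathrm{P}(1)=0$ and $\mathrm{P}'(1)=-\pi^2/(6\log2)$, and $a=(6\log2\log\beta)/\pi^2$ makes $a\cdot\pi^2/(6\log2)=\log\beta$, differentiating each bracketed expression at $t=0$ gives slope $-\varepsilon\pi^2/(6\log2)<0$ while its value at $t=0$ is $0$; hence small $t>0$ already makes both infima negative. I expect the genuine obstacle to be the upper estimate, precisely because the length of a $\beta$-cylinder has no absolute lower bound: the term $l_n$ forces the splitting on $\{l_n>L\}$ and the balancing $L=\lfloor\ell n\rfloor$, and it is this balancing that is responsible for the $1/(t+1)$ normalization in $\theta_1$. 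By contrast the lower estimate needs only the trivial bound $|J_n|\leq\beta^{-n}$, the one subtlety there being that the continued fraction alphabet is infinite, which is handled by the large/small dichotomy on the cylinders $I$.
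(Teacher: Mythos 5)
Your proposal is correct, and it splits naturally into two halves relative to the paper. For the upper estimate (\ref{he}) you follow essentially the paper's own route: the same reduction via Lemma \ref{jihe}, Proposition \ref{zhongyao1} and (\ref{cylinder inequality}) to the inequality $q_m^2 \leq \beta^{\,n+l_n+1}$, the same split according to the zero-run $l_n$, Markov's inequality plus Lemma \ref{proofs}, and then an optimization of the split threshold; your discrete balancing $L=\lfloor \ell n\rfloor$ is exactly the paper's optimization over the continuous parameter $\delta$ (with $\delta=\beta^{-L}$), and both produce the $1/(t+1)$ normalization in $\theta_1$. The only real variation there is that you bound $\lambda\{l_n>L\}\leq C^2\beta^{-(L+1)}$ using $T_\beta$-invariance of $\mu$ and (\ref{beta measure}), where the paper counts cylinders via Proposition \ref{Renyi} (its Lemma \ref{l-n xiaoyu}); both give the needed $O(\beta^{-L})$ decay. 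For the lower estimate (\ref{cha}) your argument is genuinely different. The paper proves a geometric lemma (Lemma \ref{jihe2}): if $J(\varepsilon_1(x),\dots,\varepsilon_n(x))\not\subseteq I(a_1(x),\dots,a_{m+1}(x))$, then $J_n(x)$ contains an entire neighboring continued-fraction cylinder of level $m+2$ or $m+3$, whence $|J_n(x)|\geq 1/(6q_{m+3}^2(x))$ pointwise on the event, and the event sits inside $\{q_{m+3}^2\geq \beta^n/6\}$, to which Markov applies. You instead observe that on the event $x$ lies within $|J_n(x)|\leq\beta^{-n}$ of the boundary of its $(m+1)$-cylinder, decompose $[0,1)$ into level-$(m+1)$ cylinders, bound the collar measure by $\min\{2\beta^{-n},|I|\}$, and interpolate using $\tfrac12 q_{m+1}^{-2}\leq |I|\leq q_{m+1}^{-2}$ to get $\lambda\{k_n\leq m\}\leq 4\,\beta^{-nt}\mathrm{E}(q_{m+1}^{2t})$; this is correct (the constant $4$ works in both regimes since $0<t<1/2$). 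Your route buys simplicity: it avoids the paper's case analysis on the parity of the level (which governs the left-to-right ordering of subcylinders) and needs only that same-level cylinders are disjoint intervals of length comparable to $q_{m+1}^{-2}$, while the paper's lemma gives a stronger pointwise length bound and uses $q_{m+3}$ in place of $q_{m+1}$ -- a difference that is immaterial for the exponential rate, so both land on $-t\log\beta+(a-\varepsilon)\mathrm{P}(1-t)$. The negativity of $\theta_1(\varepsilon)$ and $\theta_2(\varepsilon)$ is argued identically in both (value $0$ and slope $-\varepsilon\pi^2/(6\log 2)$ at $t=0$, via (\ref{ele})).
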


\begin{remark}\label{add remarks}
By the domain of the function $\mathrm{P (\cdot)}$, we first remark that the quantities $\theta_1(\varepsilon)$ and $\theta_2(\varepsilon)$ can be rewritten as
\[
\theta_1(\varepsilon) = \inf_{t > -1/2} \left\{\frac{1}{t+1}\Big(t\log\beta + (a + \varepsilon)\mathrm{P}(t+ 1)\Big)\right\}
\]
and
\[
\theta_2(\varepsilon) = \inf_{t <1/2} \big\{-t\log\beta + (a - \varepsilon)\mathrm{P}(1-t)\big\}.
\]
In fact, for any $\varepsilon>0$, we define $f(t) = t\log\beta + (a + \varepsilon)\mathrm{P}(t+1)$ for all $-1/2 <t \leq 0$ and $g(t) = -t\log\beta + (a - \varepsilon)\mathrm{P}(1-t)$ for all $t\leq 0$.
Moreover, we actually obtained that $f(t) \geq 0$ and $g(t) \geq 0$. Since $\mathrm{P(\cdot)}$ is convex and real-analytic on $(1/2,+\infty)$, we know that $P^\prime(t+1) \leq P^\prime(1)$ for any $-1/2<t \leq 0$ and $P^\prime(1-t) \geq P^\prime(1)$ for any $t \leq 0$.
It follows from (\ref{ele}) that
\[
f^\prime(t) = \log\beta + (a + \varepsilon)P^\prime(t+1) \leq \log\beta + (a + \varepsilon)P^\prime(1) = -\varepsilon\pi^2/(6\log 2)<0
\]
for any $-1/2<t \leq 0$ and
\[
g^\prime(t) = -\log\beta - (a - \varepsilon)P^\prime(1-t) \leq -\log\beta - (a - \varepsilon)P^\prime(1) = -\varepsilon\pi^2/(6\log 2)<0
\]
for any $t\leq0$.
Therefore, $f$ is decreasing on $(-1/2,0]$ and $g$ is decreasing on $(-\infty,0]$. In view of (\ref{ele}), we obtain that $f(t) \geq f(0) =0$ when $ -1/2 < t \leq 0$ and $g(t) \geq g(0) =0$ if $t \leq 0$. Thus, $\theta_1(\varepsilon)$ and $\theta_2(\varepsilon)$ are established by the above formulas. Next we give a little more information about $\theta_1(\varepsilon)$. That is,
\[
\theta_1(\varepsilon) \geq \inf_{t>-1/2}\big\{t\log\beta + (a + \varepsilon)\mathrm{P}(t+1)\big\}.
\]
In fact, it is easy to check that $\theta_1(\varepsilon) \geq \inf_{t>0}\big\{t\log\beta + (a + \varepsilon)\mathrm{P}(t+1)\big\}$ since they are both negative. Moreover, the first remark has indicated that the infmum can take over all $t > -1/2$.
\end{remark}

The following is a key lemma in the proof of the inequality (\ref{he}).

\begin{lemma}\label{l-n xiaoyu}
Let $\beta >1$ be a real number and $i \geq 0$ be an integer. Then for any $n \geq 1$,
\[
\lambda \left\{x \in [0,1): l_n(x) \geq i\right\} \leq \frac{\beta^{1-i}}{\beta-1}.
\]
\end{lemma}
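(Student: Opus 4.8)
The plan is to convert the condition $l_n(x) \geq i$ into a statement about the digits of the $\beta$-expansion, and then to cover the resulting set by cylinders whose total length is controlled by R\'enyi's counting bound (Proposition \ref{Renyi}) together with the universal upper bound $\beta^{-n}$ on cylinder lengths (Remark \ref{beta cylinder}).

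First I would observe that, by the definition (\ref{l-n}) of $l_n$, the event $\{l_n(x) \geq i\}$ is exactly the event that the $i$ digits immediately following the $n$-th one all vanish, that is, $\varepsilon_{n+1}(x) = \cdots = \varepsilon_{n+i}(x) = 0$. (For $i = 0$ the set is all of $[0,1)$ while the asserted bound exceeds $1$, so the statement is trivial; hence assume $i \geq 1$.) Consequently this set is a disjoint union of $(n+i)$-th cylinders: decomposing according to the first $n$ digits, each admissible block $(\varepsilon_1, \ldots, \varepsilon_n) \in \Sigma_\beta^n$ contributes either nothing (if the extension by $i$ zeros is not admissible) or the single cylinder $J(\varepsilon_1, \ldots, \varepsilon_n, 0, \ldots, 0)$ with $i$ appended zeros. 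Since distinct such cylinders sit inside distinct, and hence disjoint, $n$-th cylinders, we obtain
\[
\lambda\{x \in [0,1): l_n(x) \geq i\} = \sum |J(\varepsilon_1, \ldots, \varepsilon_n, \underbrace{0, \ldots, 0}_{i})|,
\]
where the sum ranges over those admissible $(\varepsilon_1, \ldots, \varepsilon_n)$ whose extension by $i$ zeros remains admissible.

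Next I would bound each summand and count the summands. By Remark \ref{beta cylinder} (equivalently, the upper bound in Proposition \ref{zhongyao1}), every $(n+i)$-th cylinder has length at most $\beta^{-(n+i)}$, so each surviving term is at most $\beta^{-(n+i)}$. The number of terms is at most the number of admissible blocks of length $n$, which by Proposition \ref{Renyi} satisfies $\sharp\Sigma_\beta^n \leq \beta^{n+1}/(\beta-1)$. Multiplying these two estimates yields
\[
\lambda\{x \in [0,1): l_n(x) \geq i\} \leq \frac{\beta^{n+1}}{\beta-1}\cdot\beta^{-(n+i)} = \frac{\beta^{1-i}}{\beta-1},
\]
which is precisely the claimed inequality.

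Since every step reduces to the two facts already available in the paper --- the length bound and the cardinality bound --- I do not expect a serious obstacle; the only point requiring care is to use the crude upper estimate $\beta^{-(n+i)}$ on cylinder lengths rather than the (non-absolute) lower bound of Proposition \ref{zhongyao1}, which plays no role here. It would be tempting but wasteful to invoke instead the invariant measure $\mu$ and its comparison constant $C$ from (\ref{beta measure}), using $\mu$-invariance to compute $\mu(T_\beta^{-n}[0,\beta^{-i}))$; that route produces only a bound of the form $C^2\beta^{-i}$, whereas the direct combinatorial count delivers the clean, $C$-free constant $\beta/(\beta-1)$ demanded by the statement.
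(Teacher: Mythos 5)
Your proof is correct and follows essentially the same route as the paper: decompose $\{l_n(x)\geq i\}$ into disjoint $(n+i)$-th cylinders of the form $J(\varepsilon_1,\ldots,\varepsilon_n,0,\ldots,0)$, bound each length by $\beta^{-(n+i)}$, and count the blocks via Proposition \ref{Renyi}. Your extra care about possibly inadmissible zero-extensions (and the handling of $i=0$) only sharpens the same argument, so there is nothing to add.
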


\begin{proof}
By the definition of $l_n(x)$ in (\ref{l-n}), it is clear to see that the result is true for $i=0$.
Now let $i \geq 1$ be an integer. Then the set $\{x \in [0,1): l_n(x) \geq i\}$ is the union of the $(n+i)$-th cylinders $J(\varepsilon_1,\cdots,\varepsilon_n, \underbrace{0,\cdots,0}_{i})$, where $(\varepsilon_1,\cdots,\varepsilon_n) \in \Sigma_{\beta}^n$. That is,
\[
 \{x \in [0,1): l_n(x) \geq i\} = \bigcup_{(\varepsilon_1,\cdots,\varepsilon_n) \in \Sigma_{\beta}^n} J(\varepsilon_1,\cdots,\varepsilon_n, \underbrace{0,\cdots,0}_{i}).
\]
Since the cylinders $J(\varepsilon_1,\cdots,\varepsilon_n)$ and $J(\varepsilon^\prime_1,\cdots,\varepsilon^\prime_n)$ are disjoint for any $(\varepsilon_1,\cdots,\varepsilon_n)\neq(\varepsilon^\prime_1,\cdots,\varepsilon^\prime_n) \in \Sigma_{\beta}^n$ and the length of $J(\varepsilon_1,\cdots,\varepsilon_n, \underbrace{0,\cdots,0}_{i})$ always satisfies
$$|J(\varepsilon_1,\cdots,\varepsilon_n, \underbrace{0,\cdots,0}_{i})| \leq \frac{1}{\beta^{n+i}}$$ for any $(\varepsilon_1,\cdots,\varepsilon_n) \in \Sigma_{\beta}^n$, it is from Proposition \ref{Renyi} that
\begin{align*}
\lambda \left\{x \in [0,1): l_n(x) \geq i\right\} &= \sum_{(\varepsilon_1,\cdots,\varepsilon_n) \in \Sigma_{\beta}^n} |J(\varepsilon_1,\cdots,\varepsilon_n, \underbrace{0,\cdots,0}_{i})|\\
&\leq  \sharp \Sigma_{\beta}^n \cdot \frac{1}{\beta^{n+i}} \leq \frac{\beta^{n+1}}{\beta - 1} \cdot \frac{1}{\beta^{n+i}} = \frac{\beta^{1-i}}{\beta-1}.
\end{align*}
\end{proof}

Now we are going to give the proof of (\ref{he}).

\begin{proof}[Proof of (\ref{he})]
For any $x \in [0,1)$ and $n \geq 1$, Proposition \ref{zhongyao1} shows that
 \begin{equation*}
|J(\varepsilon_1(x) ,\varepsilon_2(x), \cdots, \varepsilon_n(x))| \geq \frac{1}{\beta^{n+l_n(x)+1}},
\end{equation*}
where $l_n(x)$ is defined as (\ref{l-n}). Let $m \geq 1$ be an integer. In view of (\ref{cylinder inequality}) and Lemma \ref{jihe}, we deduce that
\begin{equation}\label{dayu1}
\left\{x \in \mathbb{I}: k_n(x) \geq m\right\} \subseteq \left\{x \in \mathbb{I}: \frac{1}{\beta^{n+l_n(x)+1}} \leq \frac{1}{q_m^2(x)}\right\}.
\end{equation}
Now we claim that for any $\delta >0$,
\[
\left\{x \in \mathbb{I}: \frac{1}{\beta^{l_n(x)}} \leq \frac{\beta^{n+1}}{q_m^2(x)}\right\}
\subseteq \left\{x \in \mathbb{I}: \frac{1}{\beta^{l_n(x)}} \leq \delta\right\}
\bigcup \left\{x \in \mathbb{I}: \frac{\beta^{n+1}}{q_m^2(x)} \geq \delta\right\}.
\]
Otherwise, if there exists some real number $\delta_0 > 0$ such that $1/(\beta^{l_n(x)}) > \delta_0$ and $\beta^{n+1}/(q_m^2(x)) < \delta_0~(x \in \mathbb{I})$, then we have $1/(\beta^{l_n(x)}) > \beta^{n+1}/(q_m^2(x))$. Combing this with (\ref{dayu1}), we obtain that
\begin{equation}\label{dayu2}
\lambda \big\{x \in \mathbb{I}:k_n(x) \geq m\big\} \leq \lambda \left\{x \in \mathbb{I}: \frac{1}{\beta^{l_n(x)}}\leq \delta\right\} + \lambda \left\{x \in \mathbb{I}:q_m^{-2}(x) \geq \frac{\delta}{\beta^{n+1}}\right\}.
\end{equation}
Lemma \ref{l-n xiaoyu} implies that
\[
\lambda \left\{x \in \mathbb{I}: \frac{1}{\beta^{l_n(x)}}\leq \delta\right\} = \lambda\left\{x \in \mathbb{I}: l_n(x) \geq -\log_\beta\delta \right\} \leq C_\beta \delta,
\]
where $\log_\beta$ denotes the logarithm w.r.t.~the base $\beta$ and $C_\beta = \beta^2/(\beta -1)$. For any $t>0$, the Markov's inequality shows that
\begin{align*}
\lambda \left\{x \in \mathbb{I}:q_m^{-2}(x) \geq \frac{\delta}{\beta^{n+1}}\right\} =
&\lambda \left\{x \in \mathbb{I}:q_m^{-2t}(x) \geq \left(\frac{\delta}{\beta^{n+1}}\right)^t\right\}
\leq \frac{\beta^{t(n+1)}\mathrm{E}\left(q_m^{-2t}\right)}{\delta^t}.
\end{align*}
Combining this with (\ref{dayu2}), we have that
\begin{equation*}
\lambda \big\{x \in \mathbb{I}:k_n(x) \geq m\big\} \leq C_\beta \delta + \frac{\beta^{t(n+1)}\mathrm{E}\left(q_m^{-2t}\right)}{\delta^t}.
\end{equation*}
It follows form Lemma \ref{proofs} that
\[
\mathrm{P}(t+1) = \lim_{m \to \infty}\frac{1}{m} \log \mathrm{E}\left(q_m^{-2t}\right).
\]
Hence, for any $\eta >0$, there exists a positive number $M$ (depending on $\eta $) such that for all $m \geq M$, we have
\[
\mathrm{E}\left(q_m^{-2t}\right) \leq e^{m\left(\mathrm{P}(t+1) + \eta\right)}.
\]
Therefore, for any $m \geq M$, we obtain that
\begin{equation}\label{elementary}
\lambda \big\{x \in \mathbb{I}:k_n(x) \geq m\big\} \leq C_\beta \delta + \frac{\beta^{t(n+1)}e^{m\left(\mathrm{P}(t+1) + \eta\right)}}{\delta^t}.
\end{equation}
For any $\varepsilon> 0$ and $n \geq 1$, let $m_n = [n(a+\varepsilon)]$. Then $m_n \to \infty$,
$m_n/n \leq a+\varepsilon$ and $m_n/n \to a+\varepsilon$ as $n \to \infty$. So, there exists a positive number $N$ (depending on $\eta $ and $\varepsilon$) such that for all $n \geq N$, we have that $m_n \geq M$. Fixed such $n \geq N$, it follows from (\ref{elementary}) that
\begin{equation*}
\lambda \big\{x \in \mathbb{I}:k_n(x) \geq m_n\big\} \leq C_\beta \delta + \frac{\beta^{t(n+1)}e^{m_n\left(\mathrm{P}(t+1) + \eta\right)}}{\delta^t}.
\end{equation*}
Now we choose a suitable $\delta > 0$ such that $f(\delta) =  C_\beta \delta +\delta^{-t} \beta^{t(n+1)}e^{m_n\left(\mathrm{P}(t+1) + \eta\right)}$ reaches the minimum value. To do this, letting the derivative of $f(\delta)$ equals to zero, we calculate that
$\delta = \left(C_\beta^{-1}t \beta^{t(n+1)}e^{m_n\left(\mathrm{P}(t+1) + \eta\right)}\right)^{\frac{1}{t+1}}$.
Thus we deduce that
\begin{equation}\label{dayu3}
\lambda \big\{x \in \mathbb{I}:k_n(x) \geq m_n\big\} \leq H(t,\beta) \cdot \left(\beta^{t(n+1)}e^{m_n\left(\mathrm{P}(t+1) + \eta\right)}\right) ^{\frac{1}{t+1}},
\end{equation}
where $H(t,\beta) = \left(C_\beta^t \cdot t\right)^{\frac{1}{t+1}}(1+t^{-1})$ is a constant only depending on $t$ and $\beta$.
Since $m_n/n \leq a+\varepsilon$, we obtain that
\begin{equation*}
\lambda \left\{x \in \mathbb{I}:\frac{k_n(x)}{n} \geq a + \varepsilon \right\}
\leq \lambda \big\{x \in \mathbb{I}:k_n(x) \geq m_n\big\}.
\end{equation*}
Combing this with (\ref{dayu3}), we have that
\[
\lambda \left\{x \in \mathbb{I}:\frac{k_n(x)}{n} \geq a + \varepsilon \right\}  \leq H(t,\beta) \cdot \left(\beta^{t(n+1)}e^{m_n\left(\mathrm{P}(t+1) + \eta\right)}\right) ^{\frac{1}{t+1}}.
\]
Notice that $m_n/n \to a+\varepsilon$ as $n \to \infty$, we know that
\begin{equation*}
\limsup_{n \to \infty} \frac{1}{n}\log \lambda\left\{x \in \mathbb{I}:\frac{k_n(x)}{n} \geq a + \varepsilon \right\}  \leq \frac{1}{t+1}\Big(t\log\beta + (a + \varepsilon)\left(\mathrm{P}(t+1) + \eta\right))\Big)
\end{equation*}
and hence that
\[
\limsup_{n \to \infty} \frac{1}{n}\log \lambda\left\{x \in \mathbb{I}:\frac{k_n(x)}{n} \geq a + \varepsilon \right\}  \leq \frac{1}{t+1}\Big(t\log\beta + (a + \varepsilon)\mathrm{P}(t+1)\Big)
\]
holds for any $t >0$ since $\eta >0$ is arbitrary.

Therefore,
\begin{equation*}
\limsup_{n \to \infty} \frac{1}{n}\log \lambda\left\{x \in \mathbb{I}:\frac{k_n(x)}{n} \geq a + \varepsilon \right\}  \leq \theta_1(\varepsilon)
\end{equation*}
with
\[
\theta_1(\varepsilon) = \inf_{t>0} \left\{\frac{1}{t+1}\Big(t\log\beta + (a + \varepsilon)\mathrm{P}(t+1)\Big)\right\}.
\]
Now it remains to prove that $\theta_1(\varepsilon) <0$. Let $h(\omega)$ be the function defined as
\[
h(\omega) = \omega\log\beta + (a + \varepsilon)\mathrm{P}(\omega+1)\ \ \  \text{for all}\ \omega > -1/2.
\]
It is clear to see $h$ is real-analytic on $(-1/2, +\infty)$ since the pressure function $\mathrm{P}(\cdot)$ is real-analytic. By the properties of $\mathrm{P}(\cdot)$ in (\ref{ele}), we know that $h(0) = 0$ and $h^{\prime}(0) = -\pi^2\varepsilon/(6\log2) < 0$. Hence there exists $t_0 >0$ such that $h(t_0) < 0$ by the definition of derivative. Thus, $\theta_1(\varepsilon) \leq h(t_0) <0$.
\end{proof}

To prove the inequality (\ref{cha}), we need the following lemma whose proof is inspired by Wu \cite{lesWu06} (see also Li and Wu \cite{lesL.W08}).

\begin{lemma}\label{jihe2}
Let $n \geq 1$, $i \geq 1$ be integers and $x \in [0,1)$ be an irrational number such that $J(\varepsilon_1(x), \cdots, \varepsilon_n(x)) \not\subseteq I(a_1(x), \cdots, a_{i+1}(x))$. Then
\[
\frac{1}{6q_{i+3}^2(x)}\leq |J(\varepsilon_1(x), \cdots, \varepsilon_n(x))| \leq \frac{1}{\beta^n}.
\]
\end{lemma}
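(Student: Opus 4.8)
The upper bound $|J(\varepsilon_1(x),\cdots,\varepsilon_n(x))|\le \beta^{-n}$ is exactly the content of Remark \ref{beta cylinder} (and of Proposition \ref{zhongyao1}), so the whole point is the lower bound. The plan is to locate a convenient rational point inside $J:=J(\varepsilon_1(x),\cdots,\varepsilon_n(x))$ and then estimate its distance to $x$ through the continued fraction convergents.

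\emph{Step 1: a crossing point.} Both $x$ and the whole cylinder $J$ lie so that $x$ sits inside the continued fraction cylinder $I(a_1(x),\cdots,a_{i+1}(x))$, while the hypothesis $J\not\subseteq I(a_1(x),\cdots,a_{i+1}(x))$ furnishes a point $y\in J$ outside it. Since $J$ is an interval and, by the description of continued fraction cylinders recalled in Section 2, $I(a_1(x),\cdots,a_{i+1}(x))$ is an interval, the sub-segment of $J$ joining $x$ to $y$ must cross the boundary of $I(a_1(x),\cdots,a_{i+1}(x))$; hence $J$ contains one of its two endpoints
\[
\eta\in\left\{\frac{p_{i+1}}{q_{i+1}},\ \frac{p_{i+1}+p_{i}}{q_{i+1}+q_{i}}\right\}.
\]
As $x$ is irrational it differs from the rational $\eta$, and $\eta$ lies strictly between $x$ and $y$, so $\eta$ is interior to $J$ and therefore $|J|\ge|x-\eta|$. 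It remains to prove $|x-\eta|\ge 1/(6q_{i+3}^2)$ for whichever endpoint occurs.

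\emph{Step 2: distance estimates.} I would write each point of $I(a_1(x),\cdots,a_{i+1}(x))$ as $(\xi p_{i+1}+p_i)/(\xi q_{i+1}+q_i)$ with $\xi\ge1$, so that $x$ corresponds to the complete quotient $\xi_x=a_{i+2}(x)+[a_{i+3}(x),a_{i+4}(x),\cdots]\in[a_{i+2},a_{i+2}+1)$, while the two endpoints above correspond to $\xi=+\infty$ and $\xi=1$. Using $p_{i+1}q_i-p_iq_{i+1}=(-1)^i$ one gets the closed forms
\[
\left|x-\frac{p_{i+1}}{q_{i+1}}\right|=\frac{1}{q_{i+1}(\xi_x q_{i+1}+q_i)},\qquad \left|x-\frac{p_{i+1}+p_i}{q_{i+1}+q_i}\right|=\frac{\xi_x-1}{(\xi_x q_{i+1}+q_i)(q_{i+1}+q_i)}.
\]
In the first case $\xi_x q_{i+1}+q_i<q_{i+2}+q_{i+1}\le q_{i+3}$ already yields $|x-p_{i+1}/q_{i+1}|>1/(q_{i+1}q_{i+3})\ge 1/q_{i+3}^2$. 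In the second case the same estimate together with $q_{i+1}+q_i\le q_{i+2}\le q_{i+3}$ bounds the denominator by $q_{i+3}^2$, while the numerator $\xi_x-1=(a_{i+2}-1)+[a_{i+3},\cdots]$ is at least $1$ when $a_{i+2}\ge2$ and at least $1/(a_{i+3}+1)$ when $a_{i+2}=1$; combined with $q_{i+3}\ge a_{i+3}q_{i+2}\ge a_{i+3}(q_{i+1}+q_i)$ a short computation (reducing to $3a_{i+3}^2\ge a_{i+3}+1$) confirms $|x-\eta|\ge 1/(6q_{i+3}^2)$.

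\emph{Main obstacle.} Everything is routine except the mediant endpoint $\eta=(p_{i+1}+p_i)/(q_{i+1}+q_i)$ in the regime $a_{i+2}(x)=1$ with $a_{i+3}(x)$ large: there $x$ may be arbitrarily close to $\eta$, the crude numerator estimate degrades by a factor comparable to $a_{i+3}$, and one must reabsorb this deficit by moving two convergents further, from $q_{i+1}$ to $q_{i+3}$. This is precisely what forces the index $i+3$ and pins down the constant $6$; handling this single case cleanly, rather than the geometric reduction of Step 1, is the heart of the argument.
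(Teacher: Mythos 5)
Your proof is correct, and it takes a genuinely different route from the paper's. Both arguments begin identically: since $x\in J\cap I(a_1(x),\cdots,a_{i+1}(x))$ while $J$ reaches outside $I(a_1(x),\cdots,a_{i+1}(x))$, the interval $J$ contains the whole segment of $I(a_1(x),\cdots,a_{i+1}(x))$ between $x$ and one of its endpoints. But from there the paper never estimates the distance $|x-\eta|$ to that endpoint; instead it exhibits an entire continued-fraction cylinder inside $J$. When $i$ is even, the $(i+2)$-th cylinders $I(a_1(x),\cdots,a_{i+1}(x),j)$, $j\in\mathbb{N}$, are ordered from left to right, so the neighbouring cylinder $I(a_1(x),\cdots,a_{i+1}(x),a_{i+2}(x)+1)$ lies between $x$ and the crossed endpoint, hence inside $J$; by the exact length formula in (\ref{cylinder inequality}) and the recursion (\ref{recursive}) its length is
\[
\frac{1}{(q_{i+2}+q_{i+1})(q_{i+2}+2q_{i+1})}\ \geq\ \frac{1}{6q_{i+2}^2}\ \geq\ \frac{1}{6q_{i+3}^2}.
\]
When $i$ is odd the orientation of the sub-cylinders is reversed, and the paper descends one more level to get $I(a_1(x),\cdots,a_{i+2}(x),a_{i+3}(x)+1)\subseteq J$, which yields the bound $1/(6q_{i+3}^2)$ directly. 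So in the paper the shift from $q_{i+1}$ to $q_{i+3}$ and the constant $6$ come from the parity of the level (which way the sub-cylinders run) together with the elementary estimate $(q+q')(q+2q')\leq 6q^2$, whereas in your proof they come from the single degenerate case $a_{i+2}(x)=1$ at the mediant endpoint, resolved by the reduction to $3a_{i+3}^2\geq a_{i+3}+1$ — I checked this computation and it is valid, as are your two distance formulas obtained from the determinant identity $p_{i+1}q_i-p_iq_{i+1}=(-1)^i$ and the bounds $\xi_x q_{i+1}+q_i<q_{i+2}+q_{i+1}\leq q_{i+3}$ and $q_{i+1}+q_i\leq q_{i+2}$. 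What your approach buys is a uniform algebraic treatment of both endpoints, with no discussion of how the sub-cylinders are ordered; what the paper's buys is freedom from any explicit distance computation, using only the inclusion of a sub-cylinder and the general length formula (\ref{cylinder inequality}).
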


\begin{proof}
Since $J(\varepsilon_1(x), \cdots, \varepsilon_n(x)) \not\subseteq I(a_1(x), \cdots, a_{i+1}(x))$, we know that at least one endpoint of $J(\varepsilon_1(x), \cdots, \varepsilon_n(x))$ does not belong to $I(a_1(x), \cdots, a_{i+1}(x))$. Without loss of generality, we assume that the right endpoint of $J(\varepsilon_1(x), \cdots, \varepsilon_n(x))$ does not belong to $I(a_1(x), \cdots, a_{i+1}(x))$, i.e., the right endpoint of $I(a_1(x), \cdots, a_{i+1}(x))$ belongs to $J(\varepsilon_1(x), \cdots, \varepsilon_n(x))$.\\
{\bf Case I}. If $i$ is even, we know that $I(a_1(x), \cdots, a_{i+1}(x))$ is decomposed into a countable $(i+2)$-th cylinders like $I(a_1(x), \cdots, a_{i+1}(x), j)~(j \in \mathbb{N}$) and these cylinders $I(a_1(x), \cdots, a_{i+1}(x), 1), I(a_1(x), \cdots, a_{i+1}(x), 2), \cdots$ run from left to right. Since
$x \in J(\varepsilon_1(x), \cdots, \varepsilon_n(x)) \bigcap I(a_1(x), \cdots, a_{i+1}(x), a_{i+2}(x))$, we have that
\[
I(a_1(x), \cdots, a_{i+1}(x), a_{i+2}(x)+1) \subseteq J(\varepsilon_1(x), \cdots, \varepsilon_n(x)).
\]
By (\ref{recursive}) and (\ref{cylinder inequality}), we deduce that
\[
|I(a_1(x), \cdots, a_{i+1}(x), a_{i+2}(x)+1)| \geq \frac{1}{6q_{i+2}^2(x)} \geq \frac{1}{6q_{i+3}^2(x)}.
\]
Hence, in view of Proposition \ref{recursive}, we obtain
\[
\frac{1}{6q_{i+3}^2(x)} \leq  |J(\varepsilon_1(x), \cdots, \varepsilon_n(x))| \leq \frac{1}{\beta^n}.
\]
{\bf Case II}. If $i$ is odd, we consider the $(i+2)$-th cylinder $I(a_1(x), \cdots, a_{i+1}(x), a_{i+2}(x))$. We know that $I(a_1(x), \cdots, a_{i+1}(x), a_{i+2}(x))$ can be decomposed into a countable $(i+3)$-th cylinders like $I(a_1(x), \cdots, a_{i+1}(x), a_{i+2}(x), j)~(j \in \mathbb{N}$) and these cylinders $I(a_1(x), \cdots, a_{i+1}(x), a_{i+2}(x), 1), I(a_1(x), \cdots, a_{i+1}(x), a_{i+2}(x), 2), \cdots$ also run from left to right. Notice that $x \in J(\varepsilon_1(x), \cdots, \varepsilon_n(x)) \bigcap I(a_1(x), \cdots, a_{i+2}(x), a_{i+3}(x))$, we obtain that $I(a_1(x), \cdots, a_{i+1}(x), a_{i+3}(x)+1) \subseteq J(\varepsilon_1(x), \cdots, \varepsilon_n(x))$. Therefore,
\[
|J(\varepsilon_1(x), \cdots, \varepsilon_n(x))| \geq \frac{1}{6q_{i+3}^2(x)}.
\]
By Proposition \ref{recursive}, we complete the proof.
\end{proof}

Now we are ready to prove (\ref{cha}).

\begin{proof}[Proof of (\ref{cha})]
Let $m \geq 1$ be an integer. By Lemma \ref{jihe}, we obtain that
\[
\left\{x \in \mathbb{I}: k_n(x) \leq m\right\} = \left\{x \in \mathbb{I}: J(\varepsilon_1(x), \cdots, \varepsilon_n(x)) \not\subseteq I(a_1(x), \cdots, a_{m+1}(x))\right\}.
\]
It follows from Lemma \ref{jihe2} that
\[
\left\{x \in \mathbb{I}: J(\varepsilon_1(x), \cdots, \varepsilon_n(x)) \not\subseteq I(a_1(x), \cdots, a_{m+1}(x))\right\} \subseteq \left\{x \in \mathbb{I}: \frac{1}{6q_{m+3}^2(x)} \leq \frac{1}{\beta^n}\right\}.
\]
Therefore,
\[
\lambda\left\{x \in \mathbb{I}: k_n(x) \leq m\right\} \leq \lambda \left\{x \in \mathbb{I}:q_{m+3}^2(x) \geq \frac{\beta^n}{6}\right\}.
\]
For any $0 < t <1/2$, the Markov's inequality yields that
\begin{equation}\label{xiaoyu1}
\lambda\left\{x \in \mathbb{I}: k_n(x) \leq m\right\}
 \leq \lambda \left\{x \in \mathbb{I}: q_{m+3}^{2t}(x) \geq \left(\frac{\beta^n}{6}\right)^t\right\}
 \leq \frac{6^t \mathrm{E}\left(q_{m+3}^{2t}\right)}{\beta^{tn}}.
\end{equation}
By Lemma \ref{ele}, for any $\eta >0$, there exists a positive number $M$ (depending on $\eta$) such that for all $m \geq M$, we have
\[
\mathrm{E}\left(q_{m+3}^{2t}\right) \leq e^{m(\mathrm{P}(1-t)+\eta)}.
\]
Combing this with (\ref{xiaoyu1}), for any $m \geq M$, we obtain that
\begin{equation}\label{xiaoyu2}
\lambda\left\{x \in \mathbb{I}: k_n(x) \leq m\right\} \leq \frac{6^t e^{m(\mathrm{P}(1-t)+\eta)}}{\beta^{tn}}.
\end{equation}
For any $0<\varepsilon<a$ and $n \geq 1$, let $m_n = [n(a-\varepsilon)]+1$. Then $m_n \to \infty$, $m_n/n \geq a-\varepsilon$ and $m_n/n \to a-\varepsilon$ as $n \to \infty$. So, there exists a positive number $N$ (depending on $\eta$ and $\varepsilon$) such that for all $n \geq N$, we have that $m_n \geq M$. Now we fix such $n \geq N$, in view of (\ref{xiaoyu2}), we deduce that
\begin{equation*}
\lambda\left\{x \in \mathbb{I}: k_n(x) \leq m_n\right\} \leq 6^t\beta^{-tn} e^{m_n(\mathrm{P}(1-t)+\eta)}.
\end{equation*}
Notice that $m_n/n \geq a-\varepsilon$, we have
\begin{equation*}
\lambda\left\{x \in \mathbb{I}: \frac{k_n(x)}{n}\leq a - \varepsilon \right\} \leq \lambda\left\{x \in \mathbb{I}: k_n(x) \leq m_n\right\} \leq
6^t\beta^{-tn} e^{m_n(\mathrm{P}(1-t)+\eta)}
\end{equation*}
and hence that
\[
\limsup_{n \to \infty} \frac{1}{n}\log \lambda\left\{x \in \mathbb{I}: \frac{k_n(x)}{n}\leq a - \varepsilon \right\} \leq -t\log\beta + (a - \varepsilon) \mathrm{P}(1-t)
\]
for any $0<t < 1/2$ since $m_n/n \to a-\varepsilon$ as $n \to \infty$ and $\eta >0$ is arbitrary.
Therefore,
\begin{equation*}
\limsup_{n \to \infty} \frac{1}{n}\log \lambda\left(\frac{k_n}{n} \leq a - \varepsilon\right) \leq \theta_2(\varepsilon)
\end{equation*}
with
\[
\theta_2(\varepsilon) = \inf_{0 <t< 1/2} \big\{-t\log\beta + (a - \varepsilon)\mathrm{P}(1-t)\big\}.
\]
Now we  need to show that $\theta_2(\varepsilon) < 0$. For any $\omega <1/2$, we consider the function
\[
h(\omega) = -\omega\log\beta + (a - \varepsilon)\log\mathrm{P}(1-\omega).
\]
Then it is easy to check that $h$ is real-analytic on $(-\infty, 1/2)$ and satisfies $h(0) = 0$ and $h^{\prime}(0) = -\pi^2\varepsilon/(6\log2) < 0$ because of the properties of $\mathrm{P}(\cdot)$ in (\ref{ele}). So, if $t>0$ sufficiently close to $0$, we obtain that $\theta_2(\varepsilon) < 0$.
\end{proof}

We end this section with the proof of Theorem \ref{error term}.

\begin{proof}[Proof of Theorem \ref{error term}]
For any $\varepsilon>0$ and $n \geq 1$, since
\begin{align*}
 &\lambda \left\{x \in \mathbb{I}: \left|\frac{k_n(x)}{n} - a\right| \geq \varepsilon\right\} \notag \\
=\ &\lambda \left\{x \in \mathbb{I}: \frac{k_n(x)}{n} \geq a + \varepsilon\right\} + \lambda \left\{x \in \mathbb{I}: \frac{k_n(x)}{n} \leq a - \varepsilon\right\},
\end{align*}
we obtain that
\begin{align*}
 &\limsup_{n \to \infty} \frac{1}{n}\log \lambda \left\{x \in \mathbb{I}: \left|\frac{k_n(x)}{n} - a\right| \geq \varepsilon\right\} \\
=&\limsup_{n \to \infty} \frac{1}{n}\log \left(\lambda \left\{x \in \mathbb{I}: \frac{k_n(x)}{n} \geq a + \varepsilon\right\} + \lambda \left\{x \in \mathbb{I}: \frac{k_n(x)}{n} \leq a - \varepsilon\right\}\right)\\
\leq& \max\{\theta_1(\varepsilon),\theta_2(\varepsilon)\},
\end{align*}
where the last inequality follows from the inequalities (\ref{he}) and (\ref{cha}) and $\theta_1(\varepsilon)$ and $\theta_2(\varepsilon)$ are as defined in Proposition \ref{first}. Therefore, for any $\varepsilon>0$, there exist positive real $\alpha$ (only depending on $\beta$ and $\varepsilon$) and positive integer $N$ such that for all $n > N$, we have
\begin{equation}\label{probability}
\lambda \left\{x \in \mathbb{I}: \left|\frac{k_n(x)}{n} - a\right| \geq \varepsilon\right\} \leq e^{-\alpha n}.
\end{equation}
For all $1 \leq n \leq N$, since the probabilities of the left-hand side in (\ref{probability}) are bounded, we can choose sufficiently large $A$ (only depending on $\beta$ and $\varepsilon$) such that
\[
\lambda \left\{x \in \mathbb{I}: \left|\frac{k_n(x)}{n} - a\right| \geq \varepsilon\right\} \leq Ae^{-\alpha n}
\]
holds for all $n \geq 1$. Thus, we complete the proof of Theorem \ref{error term}.

\end{proof}

\subsection{Proof of Theorem \ref{better approximation}}

Being similar to the proof of Theorem \ref{error term}, we give a stronger result than that of Theorem \ref{better approximation} as well.

\begin{proposition}\label{second}
Let $\beta >1$ be a real number.\\
(i) If $\log\beta > {\pi^2}/{(6\log2)}$, then
\begin{equation}\label{better 1}
\limsup_{n \to \infty} \frac{1}{n}\log \lambda \left\{x \in \mathbb{I}: \left|x - \frac{p_n}{q_n}\right| \leq x -x_n \right\} \leq \theta
\end{equation}
with
\[
\theta = \inf_{0< t< 1/2} \big\{-t\log\beta + \mathrm{P}(1-t)\big\} <0.
\]
(ii) If $\log\beta < {\pi^2}/{(6\log2)}$, then
\begin{equation}\label{better 2}
\limsup_{n \to \infty} \frac{1}{n}\log \lambda \left\{x \in \mathbb{I}: x -x_n \leq \left|x - \frac{p_n}{q_n}\right|\right\} \leq \theta^{\ast}
\end{equation}
with
\[
\theta^{\ast} = \inf_{t>0} \left\{\frac{1}{t+1}\Big(t\log\beta + \mathrm{P}(t+1)\Big)\right\} <0.
\]
\end{proposition}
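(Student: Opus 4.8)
The plan is to estimate the two Lebesgue measures by comparing the approximation speeds of the $\beta$-expansion and the continued fraction expansion, and then to translate the resulting inequalities into tail bounds on $q_n$ controlled by the Diophantine pressure function, exactly as in the proof of Theorem \ref{error term}. The starting point is the pair of two-sided estimates already available in the excerpt: from (\ref{Diophantine}) the continued fraction error satisfies $\frac{1}{2q_{n+1}^2(x)} \leq \left|x - \frac{p_n(x)}{q_n(x)}\right| \leq \frac{1}{q_n^2(x)}$, while for the $\beta$-expansion convergent $x_n = \omega_n(x)$ the quantity $x - x_n$ coincides with the displacement of $x$ inside its $n$-th cylinder, so Proposition \ref{zhongyao1} gives $\frac{1}{\beta^{n+l_n(x)+1}} \leq x - x_n \leq \frac{1}{\beta^n}$. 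I would use these to replace each event by an event involving only $q_n(x)$ (plus, in part (i), the auxiliary quantity $l_n(x)$), so that Markov's inequality and Lemma \ref{proofs} can be applied.

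For part (i), on the event $\left|x - \frac{p_n}{q_n}\right| \leq x - x_n$ I would chain the inequalities to get $\frac{1}{2q_{n+1}^2(x)} \leq \left|x-\frac{p_n}{q_n}\right| \leq x - x_n \leq \frac{1}{\beta^n}$, hence $q_{n+1}^2(x) \geq \beta^n/2$. Since $q_{n+1}(x) = a_{n+1}(x) q_n(x) + q_{n-1}(x) \geq q_n(x)$ controls $q_{n+1}$ in terms of $q_n$ only up to one extra partial quotient, I would reduce to a tail bound of the form $\lambda\{q_{n+1}^2 \geq \beta^n/2\}$ and apply Markov to $q_{n+1}^{2t}$ with $0 < t < 1/2$. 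Lemma \ref{proofs} then yields $\limsup_n \frac{1}{n}\log \mathrm{E}(q_{n+1}^{2t}) = \mathrm{P}(1-t)$, giving the exponent $-t\log\beta + \mathrm{P}(1-t)$ for every such $t$, and taking the infimum over $0 < t < 1/2$ produces $\theta$. This is structurally identical to the derivation of (\ref{cha}).

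For part (ii), on the event $x - x_n \leq \left|x - \frac{p_n}{q_n}\right|$ I would instead combine $\frac{1}{\beta^{n+l_n(x)+1}} \leq x - x_n \leq \left|x-\frac{p_n}{q_n}\right| \leq \frac{1}{q_n^2(x)}$, so $q_n^2(x) \leq \beta^{n+l_n(x)+1}$, i.e.\ $\frac{1}{q_n^{2}(x)} \geq \frac{1}{\beta^{l_n(x)}}\cdot\frac{1}{\beta^{n+1}}$. I would then split the event along a threshold $\delta>0$ using the union bound, precisely as in the proof of (\ref{he}): one piece is controlled by Lemma \ref{l-n xiaoyu} (giving a term of size $C_\beta\delta$), the other is $\lambda\{q_n^{-2}(x) \geq \delta/\beta^{n+1}\}$, handled by Markov's inequality applied to $q_n^{-2t}$ with $t>0$ and Lemma \ref{proofs}, which identifies $\lim_n \frac{1}{n}\log\mathrm{E}(q_n^{-2t}) = \mathrm{P}(t+1)$. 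Optimizing over $\delta$ as in (\ref{dayu3}) contributes the factor $\frac{1}{t+1}$, so the exponent becomes $\frac{1}{t+1}\big(t\log\beta + \mathrm{P}(t+1)\big)$, and the infimum over $t>0$ gives $\theta^{\ast}$.

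The strictly negative sign of the two exponents is where the hypotheses on $\beta$ enter, and I expect the verification that $\theta<0$ and $\theta^{\ast}<0$ to be the real content rather than a formality. Following Remark \ref{add remarks}, I would study $h_1(t) = -t\log\beta + \mathrm{P}(1-t)$ near $t=0$ and $h_2(t) = t\log\beta + \mathrm{P}(t+1)$ near $t=0$; both satisfy $h_i(0)=0$, and by (\ref{ele}) one computes $h_1'(0) = -\log\beta - \mathrm{P}'(1) = -\log\beta + \pi^2/(6\log 2)$ and $h_2'(0) = \log\beta + \mathrm{P}'(1) = \log\beta - \pi^2/(6\log 2)$. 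Thus $h_1'(0)<0$ exactly when $\log\beta > \pi^2/(6\log 2)$, and $h_2'(0)<0$ exactly when $\log\beta < \pi^2/(6\log 2)$; in each regime a value of $t$ slightly above $0$ makes the corresponding $h_i$ negative, forcing the infimum to be strictly negative. This is precisely why the two cases of the theorem are governed by comparison of $\log\beta$ with the critical value $\pi^2/(6\log 2)$, and why the critical $\beta$ itself escapes the method. The one point demanding care is the passage from $q_{n+1}$ back to $q_n$ in part (i) (and the analogous index shift $q_{m+3}\to$ rate of $q_m$ used earlier): since $\frac{1}{n}\log\mathrm{E}(q_{n+1}^{2t})$ and $\frac{1}{n}\log\mathrm{E}(q_{n}^{2t})$ share the same limit, the finite index shift does not affect the exponential rate, so I would note this explicitly but expect no genuine difficulty.
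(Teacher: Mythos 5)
Your proposal is correct, and your part (i) is the paper's proof verbatim in structure: chain (\ref{Diophantine}) with $x - x_n \leq \beta^{-n}$ to get $q_{n+1}^2(x) \geq \beta^n/2$ on the event, apply Markov's inequality to $q_{n+1}^{2t}$ for $0<t<1/2$, invoke Lemma \ref{proofs} (the index shift from $q_n$ to $q_{n+1}$ being harmless, as you note and as the paper implicitly uses with $q_{m+3}$), and take the infimum; the negativity argument via $h(0)=0$ and $h^{\prime}(0) = -\log\beta + \pi^2/(6\log 2) < 0$ is also exactly the paper's.

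Part (ii) is where you genuinely deviate. You bound the event through $q_n^2(x) \leq \beta^{n+l_n(x)+1}$, i.e.\ you lower-bound the displacement $x - x_n$ by the run of zeros $l_n(x)$ and then control $\lambda\{\beta^{-l_n} \leq \delta\}$ by Lemma \ref{l-n xiaoyu} --- precisely the mechanism the paper uses to prove (\ref{he}). The paper instead exploits the exact identity $x - x_n = T_\beta^n x/\beta^n$, so the ``small displacement'' piece of the union bound becomes $\{x: T_\beta^n x \leq \delta\}$, whose Lebesgue measure is at most $C\delta$ by the $T_\beta$-invariance of R\'{e}nyi's measure $\mu$ together with the equivalence (\ref{beta measure}); the Markov step, the optimization over $\delta$ producing the factor $\frac{1}{t+1}$, and the final infimum are then identical, and both routes yield the same exponent $\frac{1}{t+1}\big(t\log\beta+\mathrm{P}(t+1)\big)$. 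What your route buys is uniformity: parts (i) and (ii) become literally the same computations as (\ref{cha}) and (\ref{he}), with no appeal to the invariant measure. What the paper's route buys is directness: it uses the exact value of $x - x_n$ rather than a one-sided estimate through $l_n$, so Lemma \ref{l-n xiaoyu} is not needed at all here. One small citation point: the statement of Proposition \ref{zhongyao1} bounds the cylinder length $|J(\varepsilon_1(x),\ldots,\varepsilon_n(x))|$, not $x - x_n$; the displacement bound $\beta^{-(n+l_n(x)+1)} \leq x - x_n$ that you need is established inside its proof, or can be re-derived in one line from $x - x_n = \sum_{k>n}\varepsilon_k(x)\beta^{-k} \geq \beta^{-(n+l_n(x)+1)}$ since $\varepsilon_{n+l_n(x)+1}(x) \geq 1$, so cite that rather than the proposition as stated.
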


\begin{remark}\label{3.6}
By the similar methods with Remark \ref{add remarks}, the constants $\theta$ and $\theta^{\ast}$ can also be rewritten as
\[
\theta = \inf_{t< 1/2} \big\{-t\log\beta + \mathrm{P}(1-t)\big\}
\]
and
\[
\theta^{\ast} = \inf_{t>-1/2} \Big\{\frac{1}{t+1}\left(t\log\beta + \mathrm{P}(t+1)\Big)\right\}.
\]
Besides, we can also give more remarks on $\theta$ and $\theta^{\ast}$, which indicates that $\theta$ and $\theta^{\ast}$ are related to the multifractal analysis for the Lyapunov exponent
of the Gauss transformation. Recall that Kesseb\"{o}hmer and Stratmann \cite[Theorem 1.3]{lesKS07} (see also Fan et al. \cite{lesF.L.W.W}) proved that
\[
\tau(\gamma):= \dim_H \left\{x \in [0,1): \mathcal{L}(x) = \gamma \right\} = \frac{\inf_{t \in \mathbb{R}}\{t\cdot \gamma + \mathrm{P}(t)\}}{\gamma}
\]
for any $\gamma \geq 2\log((\sqrt{5}+1)/2)$, where $\dim_H$ denotes the Hausdorff dimension and $\mathcal{L}(x)$ is the Lyapunov exponent of the Gauss transformation $T$ defined as
\[
\mathcal{L}(x):= \lim\limits_{n \to \infty}\frac{1}{n}\log |(T^n)^{\prime}(x)|,
\]
if the limit exists. Since $\log\beta > {\pi^2}/{(6\log2)} > 2\log((\sqrt{5}+1)/2)$, a simple calculation implies that the constant $\theta$ have an alternative form
\[
\theta = (\tau(\log \beta) -1)\log \beta.
\]
It is not difficult to check that
\[
\theta^{\ast} \geq \inf_{t>-1/2} \big\{t\log\beta + \mathrm{P}(t+1)\big\} = (\tau(\log \beta) -1)\log \beta,
\]
where the last equality only holds for $\log\beta \geq 2\log((\sqrt{5}+1)/2)$.
By Theorem 1.3 of Kesseb\"{o}hmer and Stratmann \cite{lesKS07} (see also Theorem 1.3 of Fan et al.~\cite{lesF.L.W.W}), we know that $-(\log \beta)/2 < \theta <0$ for $\log\beta > {\pi^2}/{(6\log2)}$ and $\theta^{\ast} \geq -\log\beta$ for $\log\beta \geq 2\log((\sqrt{5}+1)/2)$. This is one way to show $\theta$ is negative and also gives the lower bounds for $\theta$ and $\theta^{\ast}$.
\end{remark}

We first give the proof of the inequality (\ref{better 1}).

\begin{proof}[Proof of (\ref{better 1})]
For any irrational number $x \in [0,1)$ and $n \geq 1$, in view of (\ref{beta expansion}) and (\ref{Diophantine}), we have that
\[
x - x_n =\frac{T_{\beta}^nx}{\beta^n} \leq \frac{1}{\beta^n}\ \ \ \  \text{and}\ \ \ \ \left|x -\frac{p_n(x)}{q_n(x)}\right| \geq  \frac{1}{2q_{n+1}^{2}(x)}.
\]
Hence, we obtain that
\[
\lambda \left\{x \in \mathbb{I}: \left|x - \frac{p_n}{q_n}\right| \leq x -x_n \right\} \leq \lambda \left\{x \in \mathbb{I}:  \frac{1}{2q_{n+1}^2(x)} \leq \frac{1}{\beta^n}\right\}.
\]
For any $0 < t <1/2$, the Markov's inequality implies that
\[
\lambda \left\{x \in \mathbb{I}:  \frac{1}{2q_{n+1}^2(x)} \leq \frac{1}{\beta^n}\right\}
\leq \lambda \left\{x \in \mathbb{I}: q_{n+1}^{2t}(x) \geq \left(\frac{\beta^{n}}{2}\right)^t\right\}
\leq \frac{2^t \mathrm{E}(q_{n+1}^{2t})}{\beta^{tn}}.
\]
The similar methods of (\ref{xiaoyu2}) yield that
\begin{equation*}
\limsup_{n \to \infty} \frac{1}{n}\log \lambda \left\{x \in \mathbb{I}: \left|x - \frac{p_n}{q_n}\right| \leq x -x_n \right\} \leq -t\log\beta + \mathrm{P}(1-t)
\end{equation*}
for any $0 < t <1/2$. Therefore,
\begin{equation*}
\limsup_{n \to \infty} \frac{1}{n}\log \lambda \left\{x \in \mathbb{I}: \left|x - \frac{p_n}{q_n}\right| \leq x -x_n \right\} \leq \theta
\end{equation*}
with
\[
\theta = \inf_{0< t< 1/2} \big\{-t\log\beta + \mathrm{P}(1-t)\big\}.
\]
The condition $\log\beta > \pi^2/(6\log2)$ assures that $\theta <0$ using the similar techniques at the end of the proof of (\ref{cha}).
\end{proof}

Next we prove the inequality (\ref{better 2}).

\begin{proof}[Proof of (\ref{better 2})]
For any irrational number $x \in [0,1)$ and $n \geq 1$, by (\ref{beta expansion}) and (\ref{Diophantine}), we know that
\[
x - x_n =\frac{T_{\beta}^nx}{\beta^n} \ \ \ \  \text{and}\ \ \ \ \left|x -\frac{p_n(x)}{q_n(x)}\right| \leq \frac{1}{q_n^{2}(x)}.
\]
Thus
\begin{equation}\label{1}
\left\{x \in \mathbb{I}: x -x_n \leq \left|x - \frac{p_n}{q_n}\right|\right\} \leq \left\{x \in \mathbb{I}: \frac{T_{\beta}^nx}{\beta^n} \leq \frac{1}{q_n^2(x)}\right\}.
\end{equation}
Note that for any $\delta >0$,
\[
\left\{x \in \mathbb{I}: \frac{T_{\beta}^nx}{\beta^n} \leq \frac{1}{q_n^2(x)}\right\} \subseteq \big\{x \in \mathbb{I}: T_{\beta}^nx \leq \delta\big\} \bigcup \left\{x \in \mathbb{I}: \frac{\beta^n}{q_n^2(x)} \geq \delta\right\},
\]
otherwise, if there exists some real number $\delta_0 > 0$ such that $T_{\beta}^nx > \delta_0$ and $\beta^{n}/(q_n^2(x)) < \delta_0~(x \in \mathbb{I})$, then we have $T_{\beta}^nx  > \beta^{n}/(q_n^2(x))$ and hence that $T_{\beta}^nx /\beta^n > 1/(q_n^2(x))$. Therefore,
\begin{equation}\label{2}
\lambda \left\{x \in \mathbb{I}: \frac{T_{\beta}^nx}{\beta^n} \leq \frac{1}{q_n^2(x)}\right\}
\leq \lambda \big\{x \in \mathbb{I}: T_{\beta}^nx \leq \delta\big\} +
\lambda \left\{x \in \mathbb{I}: q_n^{-2}(x) \geq \frac{\delta}{\beta^{n}}\right\}.
\end{equation}
Since $T_{\beta}$ is measure-preserving w.r.t.~$\mu$, we have $\mu\{x \in \mathbb{I}: T_{\beta}^nx \leq \delta\} = \delta$. The relation (\ref{beta measure}) between $\mu$ and $\mathrm{P}$ yields that
\[
\lambda\big\{x \in \mathbb{I}: T_{\beta}^nx \leq \delta\big\} \leq C \delta,
\]
where $C >1$ is a constant only depending on $\beta$. For any $t>0$, the Markov's inequality indicates that
\[
\lambda \left\{x \in \mathbb{I}: q_n^{-2}(x) \geq \frac{\delta}{\beta^{n}}\right\}
= \lambda \left\{x \in \mathbb{I}: q_n^{-2t}(x)\geq \left(\frac{\delta}{\beta^n}\right)^t\right\} \leq
\frac{ \beta^{tn} \mathrm{E}(q_n^{-2t})}{\delta^t}.
\]
Combining these with (\ref{1}) and (\ref{2}), we deduce that
\[
\left\{x \in \mathbb{I}: x -x_n \leq \left|x - \frac{p_n}{q_n}\right|\right\} \leq C \delta + \frac{ \beta^{tn} \mathrm{E}(q_n^{-2t})}{\delta^t}.
\]
Using similar methods of the proof of (\ref{elementary}) and (\ref{dayu3}), we actually obtain that
\begin{equation*}
\limsup_{n \to \infty} \frac{1}{n}\log \lambda \left\{x \in \mathbb{I}: x -x_n \leq \left|x - \frac{p_n}{q_n}\right|\right\}  \leq \frac{1}{t+1}\Big(t\log\beta + \mathrm{P}(t+1)\Big)
\end{equation*}
for any $t>0$ and hence that
\begin{equation*}
\limsup_{n \to \infty} \frac{1}{n}\log \lambda \left\{x \in \mathbb{I}: x -x_n \leq \left|x - \frac{p_n}{q_n}\right|\right\} \leq \theta^{\ast}
\end{equation*}
with
\[
\theta^{\ast} = \inf_{t>0} \Big\{\frac{1}{t+1}\left(t\log\beta + \mathrm{P}(t+1)\Big)\right\}.
\]
The condition $\log\beta < \pi^2/(6\log2)$ guarantees that $\theta^{\ast} <0$ by the similar techniques at the end of the proof of (\ref{he}).
\end{proof}

At last, we are ready to prove Theorem \ref{better approximation}.

\begin{proof}[Proof of  Theorem \ref{better approximation}]
In view of Proposition \ref{second}, the similar methods of the proof of Theorem \ref{error term} give the proofs of (i) and (ii) in Theorem \ref{better approximation}.
\end{proof}

{\bf Acknowledgement}
The work was supported by NSFC 11371148, 11201155, 11271140 and Guangdong Natural Science Foundation 2014A030313230.


\begin{thebibliography}{10}
\bibitem{lesAB07} B. Adamczewski and Y. Bugeaud, {\it  Dynamics for $\beta$-shifts and Diophantine approximation}, Ergodic Theory Dynam. Systems 27 (2007), no. 6, 1695--1711.


\bibitem{lesB.I08} L. Barreira and G. Iommi, {\it Partial quotients of continued fractions and $\beta$-expansions}, Nonlinearity 21 (2008), no. 10, 2211--2219.


\bibitem{lesBer12} V. Berth\'{e}, {\it Numeration and discrete dynamical systems}, Computing 94 (2012), no. 2--4, 369--387.



\bibitem{lesBla89} F. Blanchard, {\it $\beta$-expansions and symbolic dynamics}, Theoret. Comput. Sci. 65 (1989), no. 2, 131--141.


\bibitem{lesB.W14} Y. Bugeaud and B. Wang, {\it Distribution of full cylinders and the Diophantine properties of the orbits in $\beta$-expansions}, J. Fractal Geom. 1 (2014), no. 2, 221--241.






\bibitem{lesD.F01} K. Dajani and A. Fieldsteel, {\it Equipartition of interval partitions and an application to number theory}, Proc. Amer. Math. Soc. 129 (2001), no. 12, 3453--3460.



\bibitem{lesD.K02} K. Dajani and C. Kraaikamp, {\it Ergodic Theory of Numbers}, Mathematical Association of America, Washington, DC, 2002.



\bibitem{lesFai97} C. Faivre, {\it On decimal and continued fraction expansions of a real number}, Acta Arith. 82 (1997), no. 2, 119--128.



\bibitem{lesFai98} C. Faivre, {\it A central limit theorem related to decimal and continued fraction expansion}, Arch. Math. 70 (1998), no. 6, 455--463.



\bibitem{lesF.L.W.W} A. Fan, L. Liao, B. Wang and J. Wu, {\it On Khintchine exponents and Lyapunov exponents of continued fractions}, Ergodic Theory Dynam. Systems 29 (2009), no. 1, 73--109.








 \bibitem{lesF.L.W.W13} A. Fan, L. Liao, B. Wang and J. Wu, {\it On the fast Khintchine spectrum in continued fractions}, Monatsh. Math. 171 (2013), no. 3--4, 329--340.



\bibitem{lesF.W12} A. Fan and B. Wang, {\it On the lengths of basic intervals in beta expansions},  Nonlinearity 25 (2012), no. 5, 1329--1343.





\bibitem{lesFWL} L. Fang, M. Wu and B. Li, {\it Limit theorems related to beta-expansion and continued fraction expansion}, J. Number Theory 163 (2016), 385--405.













\bibitem{lesGel59} A. Gel'fond, {\it A common property of number systems}, Izv. Akad. Nauk SSSR. Ser. Mat. 23 (1959), no. 6, 809--814.




\bibitem{lesHof78} F. Hofbauer, {\it $\beta$-shifts have unique maximal measure}, Monatsh. Math. 85 (1978), no. 3, 189--198.








\bibitem{lesHR47} P. Hsu and H. Robbins, {\it Complete convergence and the law of large numbers}, Proc. Nat. Acad. Sci. U. S. A. 33 (1947), 25--31.


\bibitem{lesKS07} M. Kesseb\"{o}hmer and B. Stratmann, {\it A multifractal analysis for Stern-Brocot intervals, continued fractions and Diophantine growth rates}, J. Reine Angew. Math. 605 (2007), 133--163.





\bibitem{lesKhi64} Y. Khintchine, {\it Continued Fractions}, The University of Chicago Press, Chicago-London, 1964.



\bibitem{lesKL15} D. Kong and W. Li, {\it Hausdorff dimension of unique beta expansions}, Nonlinearity 28 (2015), no. 1, 187--209.






\bibitem{lesL.W08} B. Li and J. Wu, {\it Beta-expansion and continued fraction expansion}, J. Math. Anal. Appl. 339 (2008), no. 2, 1322--1331.




\bibitem{lesLoc64} G. Lochs, {\it Vergleich der Genauigkeit von Dezimalbruch und Kettenbruch}, Abh. Math. Sem. Univ. Hamburg 27 (1964), 142--144.


\bibitem{lesMay90} D. Mayer, {\it On the thermodynamic formalism for the Gauss map}, Comm. Math. Phys. 130 (1990), no. 2, 311--333.








\bibitem{lesPar60} W. Parry, {\it On the $\beta$-expansions of real numbers}, Acta Math. Acad. Sci. Hungar. 11 (1960), 401--416.




\bibitem{lesPhi67} W. Philipp, {\it Some metrical theorems in number theory}, Pacific J. Math. 20 (1967), 109--127.



\bibitem{lesPW99} M. Pollicott and H. Weiss, {\it Multifractal analysis of Lyapunov exponent for continued fraction and Manneville-Pomeau transformations and applications to Diophantine approximation}, Comm. Math. Phys. 207 (1999), no. 1, 145--171.


\bibitem{lesRen57} A. R\'{e}nyi, {\it Representations for real numbers and their ergodic properties}, Acta Math. Acad. Sci. Hungar. 8 (1957), 477--493.


\bibitem{lesSch97} J. Schmeling, {\it Symbolic dynamics for $\beta$-shifts and self-normal numbers}, Ergodic Theory Dynam. Systems 17 (1997), no. 3, 675--694.


\bibitem{lesSch80} K. Schmidt, {\it On periodic expansions of Pisot numbers and Salem numbers}, Bull. London Math. Soc. 12 (1980), no. 4, 269--278.












\bibitem{lesWu06} J. Wu, {\it Continued fraction and decimal expansions of an irrational number}, Adv. Math. 206 (2006), no. 2, 684--694.


\bibitem{lesWu08} J. Wu, {\it An iterated logarithm law related to decimal and continued fraction expansions}, Monatsh. Math. 153 (2008), no. 1, 83--87.

\end{thebibliography}
\end{document}